\title{Recovering a cohomological Mackey Functor from its restriction to Sylow subgroups}
\author{Vigleik Angeltveit}
\address{Mathematical Sciences Institute \\
Australian National University \\
Canberra, ACT 0200 \\
Australia}
\newtheorem{theorem}{Theorem}[section]
\newtheorem{thm}[theorem]{Theorem}
\newtheorem{lemma}[theorem]{Lemma}
\newtheorem{prop}[theorem]{Proposition}
\theoremstyle{definition}
\newtheorem{defn}[theorem]{Definition}
\newtheorem{remark}[theorem]{Remark}
\newtheorem{example}[theorem]{Example}
\let\c@equation\c@theorem
\numberwithin{equation}{section}
\newcommand{\cA}{\mathcal{A}}              \newcommand{\cO}{\mathcal{O}}
  \newcommand{\bC}{\mathbb{C}}   \newcommand{\bF}{\mathbb{F}}          
 \newcommand{\bR}{\mathbb{R}}        \newcommand{\bZ}{\mathbb{Z}}
\newcommand{\sma}{\wedge} %smash product
\newcommand{\xto}{\xrightarrow}
\newcommand{\xfrom}{\xleftarrow}
\newcommand{\wh}{\widehat}
\newcommand{\im}{\textnormal{Im}}
\newcommand{\un}{\underline}
\newcommand{\tr}{\textnormal{Tr}}
\begin{document}

\begin{abstract}
We explain how to recover the top level of a cohomological $G$-Mackey functor $\un{M}$ from the restriction of $\un{M}$ to each of the Sylow subgroups of $G$.

As an application, we compute the Mackey functor valued $G$-equivariant homology groups of a point with constant $\bZ$-coefficients when $G$ has order $pq$ for odd primes $p < q$. We also indicate how the calculation goes for $G=A_4$.
\end{abstract}

\maketitle

\section{Introduction}
Let $G$ be a finite group and let $\un{M}$ be a $G$-Mackey functor. Then we might ask how much of $\un{M}$ is determined by its restriction to proper subgroups of $G$. For example, the relations between transfer and restriction maps in $\un{M}$ often force $\un{M}(G/G)$ to be non-zero.

In general we have no hope of recovering $\un{M}(G/G)$ from the rest of $\un{M}$, but if we assume that $G$ is not a $p$-group and that $\un{M}$ is \emph{cohomological} then this becomes possible:

\begin{thm} \label{t:main}
Let $G$ be a finite group and let $P_1,\ldots,P_n$ be a choice of Sylow subgroups, one for each prime dividing $|G|$. Then a cohomological $G$-Mackey functor $\un{M}$ is determined, up to isomorphism, by its restriction $\downarrow^G_{P_i} \un{M}$ for each $i=1,\ldots,n$ together with the action of $W_G(H) = N_G(H)/H$ on $( \downarrow^G_{P_i} \un{M} ) (P_i/H)$ for each $H \leq P_i$.
\end{thm}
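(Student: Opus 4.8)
The plan is to exhibit $\un{M}(G/G)$, and more generally every value of $\un{M}$, as a natural direct summand of a sum of values at $p$-subgroups, and then to identify the splitting idempotent with the help of the Mackey formula and the Weyl group data. The starting point is the standard consequence of the cohomological hypothesis: for any $H \le G$ with Sylow subgroups $Q_1,\dots,Q_m$, one for each prime dividing $|H|$, the relations $\tr^H_{Q_j}\circ\downarrow^H_{Q_j} = [H:Q_j]$ hold on $\un{M}(G/H)$, so, choosing $a_j \in \bZ$ with $\sum_j a_j [H:Q_j] = 1$ (possible since the indices $[H:Q_j]$ have no common prime factor), the map $(\downarrow^H_{Q_j})_j \colon \un{M}(G/H) \to \bigoplus_j \un{M}(G/Q_j)$ is split injective with retraction $\sum_j a_j \tr^H_{Q_j}$. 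Hence $\un{M}(G/H)$ is the image of the idempotent $e_H$ on $\bigoplus_j \un{M}(G/Q_j)$ whose entries are, up to the scalars $a_j$, the composites $\downarrow^H_{Q_k}\circ\tr^H_{Q_j}$, and $\downarrow^G_H$, $\tr^G_H$ are then determined along with $e_H$ by composing with the splitting for $H$. Applying this to $H = G$ and handling the proper subgroups by induction on $|G|$ --- the restriction $\downarrow^G_H \un{M}$, $H \lneq G$, is a cohomological $H$-Mackey functor all of whose Sylow restrictions and Weyl data appear in the data we are given, since Sylow subgroups of $H$ are $p$-subgroups of $G$, hence conjugate into the chosen $P_i$, and $W_H(K) \le W_G(K)$ --- the whole problem comes down to computing the composites $\downarrow^G_{P_i}\circ\tr^G_{P_j}$.

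These I would expand by the double coset formula
\[
\downarrow^G_{P_i}\circ\tr^G_{P_j} \;=\; \sum_{P_i g P_j}\tr^{P_i}_{P_i\cap{}^gP_j}\circ c_g\circ\downarrow^{P_j}_{P_j\cap{}^{g^{-1}}P_i}.
\]
If $i\ne j$ then $P_i\cap{}^gP_j$ is at once a $p_i$- and a $p_j$-group, hence trivial, so every summand factors through $\un{M}(G/1) = (\downarrow^G_{P_i}\un{M})(P_i/1)$, which carries the $W_G(1) = G$-action supplied by the hypothesis; these terms are therefore determined. If $i = j$, the summands with $g \in N_G(P_i)$ are precisely the operators of the given $W_G(P_i)$-action, while each of the remaining summands contains a conjugation isomorphism $c_g \colon \un{M}(G/L) \to \un{M}(G/{}^gL)$ between the proper subgroups $L = P_i\cap{}^{g^{-1}}P_i$ and ${}^gL = {}^gP_i\cap P_i$ of $P_i$, which are conjugate in $G$.

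The heart of the matter, and the step I expect to be the main obstacle, is to reconstruct such conjugation isomorphisms from the given data. When $L$ and ${}^gL$ are already conjugate in $P_i$, say by $x\in P_i$, one has $c_g = c_x\circ c_{x^{-1}g}$ with $x^{-1}g\in N_G(L)$, which exhibits $c_g$ as an internal $P_i$-conjugation (visible in $\downarrow^G_{P_i}\un{M}$) followed by an operator of the $W_G(L)$-action (given); in general this fails, and one must appeal to Alperin's fusion theorem to write the $G$-conjugation $L\to{}^gL$ as a composite of conjugations each of which is either internal to $P_i$ or is conjugation by an element of $N_G(U)$ for some $U\le P_i$, the latter being handled either as an operator of the $W_G(U)$-action on $\un{M}(G/U)$, when the element normalizes the source subgroup, or by restricting $\un{M}$ to the subgroup $N_G(U)$ and reading the conjugation off the already-reconstructed $N_G(U)$-Mackey functor. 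Put differently, one is claiming that the cohomological Mackey algebra of $G$ is generated as an algebra by the images of those of the $P_i$ together with the Weyl-group operators --- a reformulation of Alperin's theorem. I would expect the genuine work to lie in making this precise: keeping the induction well founded by controlling the normalizers $N_G(U)$ that occur and treating separately the subgroups $U$ normal in $G$ (using the $W_G(U)$-action on $\un{M}(G/U)$ together with the restriction and transfer maps out of it and the cohomological relations), and checking that the output is independent of the auxiliary choices of Sylow subgroups, coset representatives, and fusion decompositions, so that $\un{M}$ is determined only up to isomorphism.
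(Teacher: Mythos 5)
Your proposal reaches the same goal but by a genuinely different, and heavier, route than the paper, and the difference is instructive. The paper's key lemma (its Theorem~\ref{t:recover}) realizes $\un{M}(G/G)$ not as the \emph{image of an idempotent} on $\bigoplus_i\un{M}(G/P_i)$ but as the \emph{quotient} $\bigoplus_i\un{M}(G/P_i)/\!\sim$, where $\sim$ is generated by $\tr_H^{P_i}(x)\sim\tr_{H'}^{P_j}(c_gx)$ for $g\in G$ carrying $H\le P_i$ to $H'\le P_j$. The maps $\Phi=(r_iR^G_{P_i})_i$ and $\Psi=\sum_i\tr_{P_i}^G$ are exactly your split injection and retraction, but proving $\Phi\Psi=\mathrm{id}$ on the quotient never requires evaluating $R^G_{P_i}\tr^G_{P_k}$ honestly: each double-coset summand is absorbed wholesale by the equivalence relation, $\tr^{P_i}_{H'_j}c_{g_j}R^{P_k}_{H_j}(x)\sim\tr^{P_k}_{H_j}R^{P_k}_{H_j}(x)=[P_k:H_j]\,x$, and the whole computation collapses to the counting identity $\sum_j[P_k:H_j]=[G:P_i]$. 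Your idempotent-image formulation instead forces you to compute the entries of $e_G$ explicitly, which is precisely why you are driven to the double coset formula, then to the conjugations $c_g\colon\un{M}(G/L)\to\un{M}(G/{}^gL)$ for $G$-conjugate but not $P_i$-conjugate $L\le P_i$, and finally to Alperin's fusion theorem plus an auxiliary induction on normalizers. That is a much larger apparatus than the paper uses, and the well-foundedness and choice-independence checks you flag at the end are real work.

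That said, you have correctly isolated the one point the paper treats very tersely. Theorem~\ref{t:recover} is proved assuming access to the full Mackey functor $\un{M}$, including all conjugations $c_g$. The deduction ``Hence Theorem~\ref{t:main} follows'' requires, in addition, that the relation $\sim$ (and in particular the maps $c_g$ between $G$-conjugate, non-$P_i$-conjugate subgroups of $P_i$ — these do occur, e.g.\ the three order-$2$ subgroups of the Sylow $2$-subgroup of $A_5$) be recoverable from the restrictions $\downarrow^G_{P_i}\un{M}$ together with the $W_G(H)$-actions, and the paper does not spell this out; the subsequent Remark only sorts the relations by type. Your fusion-theoretic analysis is exactly the kind of argument needed to close that gap, so while it is more machinery than the published proof deploys, it is not misdirected effort — it fills in a step the paper leaves to the reader, and it would make a useful addendum to the argument in Section~\ref{s:recover}.
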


This means that cohomological $G$-Mackey functors are not much more complicated than cohomological $P_i$-Mackey functors for $i=1,\ldots,n$.

As a specific example, suppose $G$ is the non-abelian group of order $pq$ for primes $p < q$ with $p \mid q-1$. Then a $G$-Mackey functor is given by a diagram
\[
 \xymatrix{ & \un{M}(G/G) \ar@/_/[rd]_-{R^G_{C_q}} \ar@/_/[ldd]_-{R^G_{C_p}} & \\ & & \un{M}(G/C_q) \ar@/_/[ul]_-{\tr_{C_q}^G} \ar@/_/[ldd]_-{R^{C_q}_e} \\ \un{M}(G/C_p) \ar@/_/[uur]_-{\tr_{C_p}^G} \ar@/_/[rd]_-{R^{C_p}_e} & & \\ & \un{M}(G/e) \ar@/_/[ruu]_-{\tr_e^{C_q}} \ar@/_/[lu]_-{\tr_e^{C_p}} & }
\]
together with a $G$-action on $\un{M}(G/e)$ and a $C_p$-action on $\un{M}(G/C_q)$. (There are some compatibility conditions.)

Now suppose we are given such a diagram, but without $\un{M}(G/G)$. Then, if we assume that $\un{M}$ is cohomological, we can recover $\un{M}(G/G)$ from the rest of the diagram as
\[
 \un{M}(G/G) = \un{M}(G/C_p) \oplus \un{M}(G/C_q)/\sim,
\]
where $\sim$ is defined in Theorem \ref{t:recover} below.

As an application, we compute the Mackey functor valued $G$-equivariant homology groups of a point with constant $\bZ$-coefficients when $G$ is the non-abelian group of order $pq$ for primes $p < q$ with $p \mid q-1$. For comparison we do the same for the abelian group of order $pq$.

We also indicate how the calculation goes for $G=A_4$, although we will not give a closed form answer as the calculation for $G=C_2 \times C_2$ is already quite complicated.

Various other groups have been considered in the literature, often with a focus on the slice spectral sequence in addition to the homology of a point. The cyclic group $C_2$ was considered by Dugger in \cite{Du06}, and $C_8$ was instrumental in the Hill-Hopkins-Ravenel solution to the Kervaire Invariant One problem \cite{HHR}. Yarnall \cite{Ya17} computed the slice spectral sequence for $\Sigma^n H_{C_{p^k}} \un{\bZ}$ for $p$ odd and $n>0$, and Hill, Hopkins and Ravenel \cite{HHR17} computed the slice spectral sequence for $\Sigma^{m\lambda} H_{C_{p^k}} \un{\bZ}$ for $p$ odd and $m > 0$. Hill and Yarnall \cite{HiYa18} have an abstract description of the slices of a $C_p$-spectrum. Guillou and Yarnall \cite{GuYa} have a complete description of the slice spectral sequence for $\Sigma^n H_{C_2 \times C_2} \un{\bF_2}$ for $n > 0$, and Ellis-Bloor \cite{EB20} gives a nice calculation of the $C_2 \times C_2$-equivariant homology groups of an arbitrary virtual representation sphere. Zou \cite{Zo18} has discussed the case of the dihedral group $D_{2p}$ of order $2p$ for an odd prime $p$. Finally, the author has a complete description of the homology of a point with coefficients in an arbitrary Mackey functor and the slice spectral sequence for an arbitrary suspension of a the Eilenberg-MacLane spectrum of an arbitrary Mackey functor for $G=C_p$ \cite{An21B}.

\subsection{Organization}
We start in Section \ref{s:cohomMF} by recalling the definition of a Mackey functor and of a cohomological Mackey functor before proving the main theorem in Section \ref{s:recover}.

The rest of the paper is dedicated to examples, with Sections \ref{s:repG} through \ref{s:Ghomologyofpoint} dedicated to the non-abelian group of order $pq$ for odd primes $p < q$ with $p \mid q-1$. For the reader's convenience we recally some facts about the real representations of $G$ in Section \ref{s:repG}. In Section \ref{s:GMackeyex} we introduce the $G$-Mackey functors that show up in the calculation of the $G$-equivariant homology of a point, and in Section \ref{s:Ghomologyofpoint} we actually compute the $G$-equivariant homology of a point.

In Section \ref{s:abgroupoforderpq} we do the same for the \emph{abelian} group of order $pq$ and in Section \ref{s:A4} we briefly indicate how the calculation goes for the alternating group $A_4$.

\section{Cohomological Mackey functors} \label{s:cohomMF}
Recall, e.g.\ from \cite{We00}, that a $G$-Mackey functor consists of an abelian group $\un{M}(G/H)$ for each subgroup $H \leq G$, together with the following structure maps:
\begin{enumerate}
 \item For $K \leq H \leq G$, a restriction map
 \[
  R^H_K : \un{M}(G/H) \to \un{M}(G/K).
 \]

 \item For $K \leq H \leq G$, a transfer map
 \[
  \tr_K^H : \un{M}(G/K) \to \un{M}(G/H).
 \]

 \item For each $g \in G$ and $H \leq G$, an action map
 \[
  c_g : \un{M}(G/H) \to \un{M}(G/H') \qquad \textnormal{for }H' = gHg^{-1}
 \]
\end{enumerate}

These should satisfy various identities. Rather than listing them here, we note that they are most easily discovered by thinking of $\un{M}$ as an additive functor from the category of spans of finite $G$-sets to abelian groups. The restriction map $R^H_K$ is obtained by evaluating $\un{M}$ on the span $G/H \leftarrow G/K \xto{=} G/K$ and the transfer map $\tr_K^H$ is obtained by evaluating $\un{M}$ on the span $G/K \xfrom{=} G/K \to G/H$. The composition law for spans is given by pullback, and this tells us how to rewrite $R^H_{K'} \circ \tr_K^H$ as a sum $\sum\limits_i \tr_{L_i'}^{K'} \circ c_{g_i} \circ R^K_{L_i}$.

We think of the map $c_g : \un{M}(G/H) \to \un{M}(G/H')$ as right multiplication by $g^{-1}$: 
\[
 c_g(aH) = aHg^{-1} = ag^{-1} gHg^{-1} = ag^{-1} H'.
\] 
(We have to use right multiplication to get a $G$-equivariant map of left $G$-sets.) It is an isomorphism, so it suffices to specify $\un{M}(G/H)$ for one representative of each conjugacy class of subgroups. This is clear from thinking about finite $G$-sets, as $G/H$ and $G/H'$ are isomorphic. Moreover, $c_g = id$ for $g \in H$, so we have an induced action of $W_G(H) = N_G(H)/H$ on each $\un{M}(G/H)$.

\begin{defn}
A Mackey functor $\un{M}$ is \emph{cohomological} if $\tr_K^H R^H_K$ is multiplication by the index $[K:H]$ for all $K \leq H \leq G$.
\end{defn}

\begin{example}
For any finite group $G$ and any abelian group $C$, the constant Mackey functor $\un{C}$ with $\un{C}(G/H) = C$ for all $H \leq G$, each $R^H_K$ being the identity map, and each $\tr_K^H$ being multiplication by $[K:H]$, is a cohomological Mackey functor.

Similarly, $\un{C}^*$ with each $R^H_K$ being multiplication by $[K:H]$ and each $\tr_K^H$ being the identity, is a cohomological Mackey functor.
\end{example}

\begin{example}
If $C$ is a $\bZ[G]$-module then the Mackey functor $F(C)$ defined by $F(C)(G/H) = C^H$, the $H$-fixed points of $C$, is a cohomological Mackey functor. In this case the restriction maps are given by inclusion of fixed points, while the transfer maps are given by a sum of group actions in the usual way.

Similarly, the Mackey functor $\cO(C)$ defined by $\cO(C)(G/H) = C_H$, the $H$-invariants (or orbits) of $C$, is a cohomological Mackey functor. In this case the transfer maps are given by projection, while the restriction maps are given by a sum of group actions.
\end{example}

\begin{example}
The Burnside Mackey functor $\un{\cA}$, with $\un{\cA}(G/H) = A(H)$ the Burnside ring of $H$ for each $H \leq G$, is not cohomological unless $G=e$. In particular, $\tr_e^G R^G_e([G/G]) = [G/e]$, which is not $|G|[G/G]$.
\end{example}

\begin{prop}
Let $X$ be a $G$-CW spectrum. Then the Mackey-functor valued homology groups of $X$ with coefficients in a cohomological Mackey functor are again cohomological.
\end{prop}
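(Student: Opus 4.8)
The plan is to model $\un{H}^G_*(X;\un{M})$ by the cellular chain complex of Mackey functors coming from the $G$-CW filtration of $X$, check that every term of that complex is cohomological, and conclude using that cohomological Mackey functors are closed under sub-Mackey-functors and quotients. Recall the shape of the complex: if the cells of $X$ in dimension $n$ are indexed by orbits $G/H_\alpha$, $\alpha$ in an index set $I_n$, then $\un{H}^G_*(X;\un{M})$ is the homology of a chain complex of Mackey functors whose $n$-th term is $\bigoplus_{\alpha\in I_n}\un{M}^{G/H_\alpha}$. Here, for $H\leq G$, I write $\un{M}^{G/H}$ for the Mackey functor with $\un{M}^{G/H}(G/K)=\un{M}(G/H\times G/K)$, where $\un{M}$ is extended additively to all finite $G$-sets, whose restriction and transfer maps are $\un{M}$ applied to the defining spans of $R^K_L$ and $\tr^K_L$ crossed with $\mathrm{id}_{G/H}$; this is just the identification $\un{H}^G_*(X;\un{M})(G/K)=\un{H}^G_*(G/K_+\wedge X;\un{M})(G/G)$ together with the orbit decomposition of $G/K\times G/H_\alpha$. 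The differentials of the complex will play no role.

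The crucial observation is the following. For a cohomological Mackey functor $\un{M}$ and \emph{any} map $f\col T\to T'$ of finite $G$-sets, the composite $\tr_f\circ R^f\col\un{M}(T')\to\un{M}(T')$ of the transfer and restriction along $f$ --- i.e., of $\un{M}$ applied to the spans $T\xleftarrow{=}T\xrightarrow{f}T'$ and $T'\xleftarrow{f}T\xrightarrow{=}T$ --- acts on the summand of $\un{M}(T')$ corresponding to an orbit $\cO\subseteq T'$ as multiplication by the number of points in the fibre of $f$ over $\cO$. Since $\un{M}$ is additive in both variables, one reduces to $T'=G/K$ a single orbit, so that $T=\coprod_i G/L_i$ with each $L_i$ subconjugate to $K$; conjugating and choosing isomorphisms appropriately, $f$ becomes the standard projections $G/L_i\to G/K$ with $L_i\leq K$, and then $\tr_f\circ R^f=\sum_i\tr^K_{L_i}\circ R^K_{L_i}=\bigl(\sum_i[L_i:K]\bigr)\cdot\mathrm{id}$, which is indeed multiplication by $|f^{-1}(eK)|$ --- this is nothing but the definition of \emph{cohomological}. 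Applying this with $f=\mathrm{id}_{G/H}\times\bigl(G/L\to G/K\bigr)$, every fibre of which has $[L:K]$ points, shows that $\tr^K_L\circ R^K_L=[L:K]$ in $\un{M}^{G/H}$; hence each $\un{M}^{G/H}$, and with it each chain group $\bigoplus_\alpha\un{M}^{G/H_\alpha}$, is cohomological.

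Finally, if $j\col\un{N}\hookrightarrow\un{M}$ is a sub-Mackey-functor of a cohomological $\un{M}$, then precomposing the identity $\tr^H_K R^H_K=[K:H]$ with $j$ and using that $j$ commutes with restriction and transfer gives $j\circ\tr^H_K R^H_K=[K:H]\cdot j$ on $\un{N}(G/H)$, so $\un{N}$ is cohomological since $j$ is injective; the quotient case is dual. As $\un{H}^G_n(X;\un{M})$ is a subquotient of the cohomological Mackey functor $\bigoplus_\alpha\un{M}^{G/H_\alpha}$, it too is cohomological. I expect the only real work to be in pinning down conventions for the cellular chain complex of Mackey functors and the span/pullback bookkeeping in the crucial observation; the load-bearing point is simply that the chain groups $\un{M}^{G/H}$ carry restriction and transfer of the ``finite covering'' form to which the cohomological hypothesis applies verbatim. (A slicker alternative: a cohomological Mackey functor is exactly a module over the constant Green functor $\un{\bZ}$, this property is inherited by $\un{\pi}_*$ of any $H\un{\bZ}$-module spectrum, and $X\wedge H\un{M}$ is one; but the chain-level argument is more self-contained.)
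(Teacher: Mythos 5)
Your proof is correct and takes essentially the same route as the paper: identify the cellular chain groups as $G/K \mapsto \un{M}(\coprod_\alpha G/H_\alpha \times G/K)$, show these are cohomological, and use that cohomological Mackey functors are closed under subobjects and quotients. The paper's proof sketch asserts without detail that each chain group is cohomological, and your ``crucial observation'' about $\tr_f \circ R^f$ acting by fibre cardinality is exactly the bookkeeping needed to justify that assertion.
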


See \cite[Example 3.1.4]{Zo18} for a proof in the specal case $\un{M}=H\un{\bZ}$, $X=S^V$, and $G=D_{2p}$. The general case is probably known, but we include a proof sketch anyway.

\begin{proof}[Proof sketch]
% This is probably known, but we include a sketch proof in the general case.
If $X$ has $n$-cells $\coprod D^n \times G/H_i$ then $\un{H}_n(X; \un{M})$ can be computed using a chain complex of Mackey functors with $\un{C}_n = \big( G/K \mapsto \un{M}(\coprod G/H_i \times G/K) \big)$ and $\un{C}_n$ is again a cohomological Mackey functor. Moreover, kernels and cokernels of cohomological Mackey functors are again cohomological, so the result follows.
\end{proof}

\section{Recovering a Mackey functor from its restriction to Sylow subgroups} \label{s:recover}
As in the introduction we fix a set $P_1,\ldots,P_n$ of Sylow subgroups, one for each prime dividing $|G|$.

\begin{thm} \label{t:recover}
Given a cohomological Mackey functor $\un{M}$ we have
\[
 \un{M}(G/G) \cong \bigoplus_{i=1}^n \un{M}(G/P_i)/\!\sim
\]
where $\sim$ is generated by
\[
 \tr_H^{P_i}(x) \sim \tr_{H'}^{P_j} (c_g x)
\]
for $x \in \un{M}(G/H)$ and $g \in G$ with $H \leq P_i$ and $H'=gHg^{-1} \leq P_j$.
\end{thm}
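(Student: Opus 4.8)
The plan is to build the isomorphism from the transfer maps $\tr_{P_i}^G \colon \un{M}(G/P_i) \to \un{M}(G/G)$, assembled into a single map
\[
 \Phi \col \bigoplus_{i=1}^n \un{M}(G/P_i) \to \un{M}(G/G), \qquad \Phi = \sum_{i=1}^n \tr_{P_i}^G.
\]
First I would check that $\Phi$ respects $\sim$, i.e.\ that it factors through the quotient. For a relation $\tr_H^{P_i}(x) \sim \tr_{H'}^{P_j}(c_g x)$ with $H' = gHg^{-1}$, we compute $\Phi(\tr_H^{P_i}(x)) = \tr_{P_i}^G \tr_H^{P_i}(x) = \tr_H^G(x)$ using transitivity of transfers, and likewise $\Phi(\tr_{H'}^{P_j}(c_g x)) = \tr_{H'}^G(c_g x) = \tr_H^G(x)$, the last equality because $c_g$ conjugates $\tr_H^G$ to $\tr_{H'}^G$ inside $\un{M}(G/G)$ (this is one of the span identities, reflecting that $G/H \cong G/H'$ as $G$-sets). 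So $\Phi$ descends to $\bar\Phi$ on the quotient, and the main work is to show $\bar\Phi$ is an isomorphism.

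Surjectivity is where cohomologicality enters. Since $G$ is covered by its Sylow subgroups in the sense that $\gcd([G:P_1],\ldots,[G:P_n]) = 1$, choose integers $a_i$ with $\sum_i a_i [G:P_i] = 1$. For any $y \in \un{M}(G/G)$, the cohomological hypothesis gives $\tr_{P_i}^G R^G_{P_i}(y) = [G:P_i]\, y$, hence $y = \sum_i a_i \tr_{P_i}^G R^G_{P_i}(y) = \Phi\big( (a_i R^G_{P_i}(y))_i \big)$, proving surjectivity even before quotienting. For injectivity, suppose $\Phi\big((x_i)_i\big) = \sum_i \tr_{P_i}^G(x_i) = 0$ in $\un{M}(G/G)$; I must show $(x_i)_i$ maps to $0$ in the quotient. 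The idea is to apply $R^G_{P_j}$ and use the double-coset (Mackey) formula: $R^G_{P_j}\tr_{P_i}^G(x_i) = \sum_{P_i g P_j} \tr_{{}^{g^{-1}}P_i \cap P_j}^{P_j} c_{g^{-1}} R^{P_i}_{P_i \cap {}^g P_j}(x_i)$, and each summand is, by definition, $\sim$-equivalent in $\bigoplus_k \un{M}(G/P_k)$ to a transfer from a subgroup of $P_i$ of something conjugate to $R^{P_i}_{P_i\cap{}^gP_j}(x_i)$. The one term with $g = e$ contributes $x_j$ itself (the full transfer $\tr_{P_j}^{P_j} = \mathrm{id}$ when $i=j$), while the remaining terms, being transfers from \emph{proper} subgroups of the $P_k$'s, can be rewritten via $\sim$ and a descending induction. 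Combining the relation $\sum_i a_i R^G_{P_i}(\text{sum}) = 0$ with these identities should express $\sum_j [\text{something involving } x_j]$ as a $\sim$-combination, and then one argues $(x_i)_i \sim 0$.

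The hard part will be making the injectivity argument genuinely rigorous: the Mackey formula produces, for each pair $(i,j)$, a sum of transfers from various subgroups $P_i \cap {}^g P_j$, and one must organize these across all $i,j$ and all coset representatives so that they telescope correctly against the relation $0 = R^G_{P_j}(0) = \sum_i R^G_{P_j}\tr_{P_i}^G(x_i)$. I expect the cleanest route is an induction on $|P_i|$ (or on the poset of subgroups), showing that modulo $\sim$ every element of $\bigoplus_i \un{M}(G/P_i)$ has a canonical representative, and that $\bar\Phi$ is injective on canonical representatives; the subtlety is that the subgroups appearing in the Mackey formula for $R^G_{P_j}\tr_{P_i}^G$ are $p_j$-groups intersected with conjugates of $p_i$-groups, hence trivial when $i \neq j$ (coprimality of orders!), which drastically simplifies the cross terms and is the real reason the theorem works. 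So in fact for $i\neq j$ the Mackey formula gives $R^G_{P_j}\tr_{P_i}^G(x_i) = \sum_g \tr_e^{P_j} c_{g^{-1}} R^{P_i}_e(x_i)$, a sum of transfers from the trivial subgroup, and these are all $\sim$-identified across the different $P_k$; tracking this bookkeeping carefully is the bulk of the proof.
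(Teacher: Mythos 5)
Your two maps are exactly the paper's: the transfer map $\sum_i \tr_{P_i}^G$ (which you call $\Phi$, the paper calls $\Psi$) and the scaled restriction $y \mapsto (a_i R^G_{P_i}(y))_i$ that you write down inside your surjectivity argument. You also correctly spot the crucial simplification that makes the theorem work: when $i \neq j$, coprimality of $|P_i|$ and $|P_j|$ forces all the intersection subgroups in the double-coset formula to be trivial.

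Where your proposal differs from the paper, and where it has a real gap, is in the endgame. You prove $\Phi \circ (\text{scaled restriction}) = \mathrm{id}$ on $\un{M}(G/G)$ (this is your surjectivity) and then set out to prove injectivity of $\bar\Phi$ directly, via ``apply $R^G_{P_j}$, use the Mackey formula, telescope.'' You never finish that argument, and you say so. The fix the paper uses is to turn the same Mackey-formula computation around: instead of attacking injectivity, show that the \emph{other} composite is also the identity on the quotient. Concretely, for $x \in \un{M}(G/P_k)$ the double-coset formula gives $R^G_{P_i} \tr_{P_k}^G(x) = \sum_j \tr_{H'_j}^{P_i} c_{g_j} R^{P_k}_{H_j}(x)$; each summand is, by the defining relation of $\sim$, equivalent to $\tr_{H_j}^{P_k} R^{P_k}_{H_j}(x) = [P_k : H_j]\,x$ (cohomologicality again), and a cardinality count on $G/P_k \times G/P_i$ gives $\sum_j [P_k : H_j] = [G : P_i]$. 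Thus $R^G_{P_i}\tr_{P_k}^G(x) \sim [G:P_i]\,x$ in the quotient, and summing against the $a_i$ collapses to $x$. That single relation, $R^G_{P_i}\tr_{P_k}^G(x) \sim [G:P_i]\,x$, is the missing lemma: it converts your open-ended injectivity bookkeeping into a one-line identity and closes the proof without any descending induction on subgroups. Everything else in your write-up (well-definedness of $\bar\Phi$ via transitivity of transfers and conjugation-invariance, the Bezout choice of $a_i$, the observation that cross terms $i \neq j$ only involve the trivial subgroup) is correct and matches the paper.
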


In other words, we can recover the top level of $\un{M}$ from the value of $\un{M}$ on each of the Sylow subgroups. Similarly, we can recover $\un{M}(G/K)$ for all $K \leq G$ as well as the remaining restriction and transfer maps. Hence Theorem \ref{t:main} follows from Theorem \ref{t:recover}.

\begin{remark}
The equivalence relation in Theorem \ref{t:recover} includes the case $H=P_i$, with $\tr_{P_i}^{P_i}$ the identity. Since $P_i$ is not subconjugate to any other $P_j$ we must have $i=j$ and $g \in N_G(P_i)$, and since $P_i$ acts trivially on $\un{M}(G/P_i)$ we end up quotienting out by $W_G(P_i) = N_H(P_i)/P_i$.

Similarly, if $H \neq e$ then $H$ can be subconjugate to at most one $P_i$ so we must have $i=j$ in $\tr_H^{P_i}(x) \sim \tr_{H'}^{P_j}(x)$. But if $H=e$ we get $\tr_e^{P_i}(x) \sim \tr_e^{P_j}(c_g x)$ with no conditions on $i$, $j$ or $g$.
\end{remark}

\begin{proof}
We choose integers $r_1,\ldots,r_n$ with $\sum r_i [G:P_i] = 1$ and consider the map
\[
 (r_1 R^G_{P_1},\ldots,r_nR^G_{P_n}) : \un{M}(G/G) \to \bigoplus \un{M}(G/P_i).
\]
Let $\Phi : \un{M}(G/G) \to \bigoplus \un{M}(G/P_i)/\sim$ denote the induced map to $\bigoplus \un{M}(G/P_i)/\sim$.

In the other direction, we consider the map
\[
 (\tr_{P_1}^G,\ldots,\tr_{P_n}^G) : \bigoplus \un{M}(G/P_i) \to \un{M}(G/G).
\]
From the axioms for a $G$-Mackey functor, this map is constant on equivalence classes so we get an induced map $\Psi : \bigoplus \un{M}(G/P_i)/\!\sim\, \to \un{M}(G/G)$.

Using that $\un{M}$ is cohomological we find that
\[
 \Psi \circ \Phi (x) = \sum r_i \tr_{P_i}^G \circ R^G_{P_i} (x) = \sum r_i [G:P_i] x = x,
\]
so $\Psi \circ \Phi = id_{\un{M}(G/G)}$.

Conversely, if we start with a class in $\bigoplus \un{M}(G/P_i)$ represented by $x \in \un{M}(G/P_k)$ we find that
\[
 \Phi \circ \Psi (x) = \sum_i r_i R^G_{P_i} \tr_{P_k}^G (x).
\]
The actual formula for $R^G_{P_i} \tr_{P_k}^G$ might be complicated, but we claim that we have
\[
 R^G_{P_i} \tr_{P_k}^G (x) \sim [G : P_i] x.
\]
It then follows that $\Phi \circ \Psi (x) = x$, and we are done.

To see this, we note that we can compute $R^G_{P_i} \circ \tr_{P_k}^G$ by decomposing $G/P_k \times G/P_i$ into orbits, and we get some formula
\[
 R^G_{P_i} \tr_{P_k}^G = \sum_j \tr_{H_j'}^{P_i} \circ c_{g_j} \circ R^{P_k}_{H_j}.
\]
If $i=k$ then different subgroups $H_j$ are possible, and if $i \neq k$ then $H_j = e$.

Because $|G/P_k \times G/P_i| = [G : P_k] [G : P_i]$, we must have $\sum\limits_j [P_k : H_j] = [G : P_i]$.

Now we use that
\[
 \tr_{H_j'}^{P_i} \circ c_{g_j} \circ  R^{P_k}_{H_j} (x) \sim \tr_{H_j}^{P_k} R^{P_k}_{H_j} (x).
\]
Since $\un{M}$ is cohomological the right hand side is $[P_k : H_j] x$, and we conclude that $R^G_{P_i} \tr_{P_k}^G (x) \sim [G:P_i] x$. Hence $\Phi \circ \Psi (x) \sim \sum\limits_i r_i [G:P_i] x = x$, and the result follows. 
\end{proof}

\section{Some representation theory for the nonabelian group of order $pq$} \label{s:repG}
All the material in this section is well known, but we include it for the reader's convenience and to fix notation. Fix primes $p < q$ with $p \mid q-1$. Then there are exactly two groups of order $pq$ up to isomorphism (one if $p \nmid q-1$). There is the cyclic group $C_{pq} \cong C_p \times C_q$, and a unique non-abelian group $G = C_q \rtimes C_p$ defined by a map $C_p \to Aut(C_q)$ which we suppress from the notation. Here $C_q \triangleleft G$ is normal, while there are $q$ conjugate subgroups isomorphic to $C_p$.

We fix a presentation
\[
 G = \langle a, b \quad | \quad a^p, b^q, ba=ab^k \rangle.
\]

\subsection{Complex representations} \label{ss:Crep}
The irreducible complex representations of $C_p = \langle a \quad | \quad a^p \rangle$ are as follows: First we have the trivial representation $1$. Then we have the $1$-dimensional representations $V_i$ for $1 \leq i \leq p-1$, where $a$ acts on $V_i$ by rotation by $i \cdot \frac{2\pi}{p}$. We denote the corresponding representations of $C_q = \langle b \quad | \quad b^q \rangle$ by $V_i'$ for $1 \leq i \leq q-1$.

The irreducible complex representations of $G$ are as follows: First, we have the trivial representation $1$. Then we have the $1$-dimensional representations $V_i$ for $1 \leq i \leq p-1$, with $G$ acting through the quotient $G \to G/C_q \cong C_p$. (We use the same notation for the $G$-representation and the corresponding $C_p$-representation.) With our presentation, $a$ acts by rotation by $i \cdot \frac{2\pi}{p}$ while $b$ acts trivially. Finally, we have the $p$-dimensional representations $W_i$ for $i \in I$, where $I \subset \{1,\ldots,q-1\}$ is a subset of cardinality $\frac{q-1}{p}$. These can most easily be defined as $Ind_{C_q}^G V_i'$ for $i \in I$.

The representation $W_i$ can be described as $\bC^p$, with $C_p$ cyclically permuting the coordinates and $Res^G_{C_q} W_i = V_i' \oplus V_{ki}' \oplus \ldots \oplus V_{k^{p-1} i}'$. The set $I$ can be described as follows. We consider the action of $C_p$ on $\{1,\ldots,q-1\}$ where $a$ acts by multiplication by $k$. This partitions $\{1,\ldots,q-1\}$ into $\frac{q-1}{p}$ subsets, and then we pick one $i$ from each subset.

\subsection{Real representations}
Except for the trivial representation, all the irreducible representations of $G$ are complex (as opposed to real or quaternionic), and it follows that the irreducible real representations of $G$ are as follows: First, we have the trivial representation $1$. Then we have the $2$-dimensional representations $V_i$ for $1 \leq i \leq \frac{p-1}{2}$, with $V_i \cong V_{p-i}$. Finally, we have the $2p$-dimensional representations $W_j$ for $j \in J$ for a set $J$ of cardinality $\frac{q-1}{2p}$.

\begin{example}
Let $p=3$ and $q=13$. Then we can choose the presentation $\langle a, b \quad  | \quad a^3, b^{13}, ba=ab^3 \rangle$ for $G$. The irreducible complex representations of $G$ are $1$, $V_1$, $V_2$, $W_1$, $W_2$, $W_4$ and $W_7$. We have
\begin{eqnarray*}
R^G_{C_{13}}(W_1) & \cong & V_1' \oplus V_3' \oplus V_9' \\
R^G_{C_{13}}(W_2) & \cong & V_2' \oplus V_6' \oplus V_5' \\
R^G_{C_{13}}(W_4) & \cong & V_4' \oplus V_{12}' \oplus V_{10}' \\
R^G_{C_{13}}(W_7) & \cong & V_7' \oplus V_8' \oplus V_{11}'
\end{eqnarray*}

When considering real representations instead we have $V_1 \cong V_2$, $W_1 \cong W_4$, and $W_2 \cong W_7$. So in this case we can take $I = \{1,2,4,7\}$ and $J = \{1,2\}$, and it follows that $1$, $V_1$, $W_1$, $W_2$ is a complete list of irreducible real representations of $G$. Other choices of $I$ and $J$ are possible.
\end{example}

\section{Some cohomological $G$-Mackey functors} \label{s:GMackeyex}
In this section $G$ still denotes the nonabelian group of order $pq$. We will represent a $G$-Mackey functor by a diagram
\[
 \xymatrix{ & \un{M}(G/G) \ar@/_/[rd]_-{R^G_{C_q}} \ar@/_/[ldd]_-{R^G_{C_p}} & \\ & & \un{M}(G/C_q) \ar@/_/[ul]_-{\tr_{C_q}^G} \ar@/_/[ldd]_-{R^{C_q}_e} \\ \un{M}(G/C_p) \ar@/_/[uur]_-{\tr_{C_p}^G} \ar@/_/[rd]_-{R^{C_p}_e} & & \\ & \un{M}(G/e) \ar@/_/[ruu]_-{\tr_e^{C_q}} \ar@/_/[lu]_-{\tr_e^{C_p}} & }
\]
Here we omit the additional subgroups that are conjugate to $C_p$ from the diagram. This is not a serious omission, since if $C_p'$ is conjugate to $C_p$ then $\un{M}(G/C_p')$ is isomorphic to $\un{M}(G/C_p)$. If the $G$-action on $\un{M}(G/e)$ or the $C_p$-action on $\un{M}(G/C_q)$ is nontrivial we will indicate that in the notation. See Example \ref{ex:Cqhat} below.

We will also need to know how to compose a transfer map with a restriction map. In particular we find the following:
\begin{eqnarray*}
 R^G_{C_p} \circ \tr_{C_p}^G (x) & = & x + \sum_{i \in I} \tr_e^{C_p} \big( b^i R^{C_p}_e(x) \big) \\
 R^G_{C_q} \circ \tr_{C_p}^G (x) & = & \tr_e^{C_q} \circ R^{C_p}_e (x) \\
 R^G_{C_p} \circ \tr_{C_q}^G (x) & = & \tr_e^{C_p} \circ R^{C_q}_e (x) \\
 R^G_{C_q} \circ \tr_{C_q}^G (x) & = & \sum_{i=0}^{p-1} a^i x.
\end{eqnarray*}
Here $I$ is as in the definition of the irreducible complex $G$-representations from Subsection \ref{ss:Crep}, and $a$ and $b$ are the same as in the presentation of $G$ given at the start of Section \ref{s:repG}.

We give some examples of cohomological Mackey functors for $G$:

\begin{example}
We have multiple versions of the constant Mackey functor $\un{\bZ}$.
\[
 H\un{\bZ} = H\un{\bZ}^{1,1} = \vcenter{\hbox{\xymatrix{ & \bZ \ar@/_/[rd]_-1 \ar@/_/[ldd]_-1 & \\ & & \bZ \ar@/_/[ul]_-p \ar@/_/[ldd]_-1 \\ \bZ \ar@/_/[uur]_-q \ar@/_/[rd]_-1 & & \\ & \bZ \ar@/_/[ruu]_-q \ar@/_/[lu]_-p & } }} \qquad
 H\un{\bZ}^{p,1} = \vcenter{\hbox{\xymatrix{ & \bZ \ar@/_/[rd]_-p \ar@/_/[ldd]_-1 & \\ & & \bZ \ar@/_/[ul]_-1 \ar@/_/[ldd]_-1 \\ \bZ \ar@/_/[uur]_-q \ar@/_/[rd]_-p & & \\ & \bZ \ar@/_/[ruu]_-q \ar@/_/[lu]_-1 & } }}
\]

\[
 H\un{\bZ}^{1,q} = \vcenter{\hbox{\xymatrix{ & \bZ \ar@/_/[rd]_-1 \ar@/_/[ldd]_-q & \\ & & \bZ \ar@/_/[ul]_-p \ar@/_/[ldd]_-q \\ \bZ \ar@/_/[uur]_-1 \ar@/_/[rd]_-1 & & \\ & \bZ \ar@/_/[ruu]_-1 \ar@/_/[lu]_-p & } }} \qquad
 H \un{\bZ}^* = H\un{\bZ}^{p,q} = \vcenter{\hbox{\xymatrix{ & \bZ \ar@/_/[rd]_-p \ar@/_/[ldd]_-q & \\ & & \bZ \ar@/_/[ul]_-1 \ar@/_/[ldd]_-q \\ \bZ \ar@/_/[uur]_-1 \ar@/_/[rd]_-p & & \\ & \bZ \ar@/_/[ruu]_-1 \ar@/_/[lu]_-1 & } }}
\]

In each case the $G$-action on the $\bZ$ in position $G/e$ and the $C_p$-action on the $\bZ$ in position $G/C_q$ are necessarily trivial.
\end{example}

\begin{example}
We have a Mackey functor
\[
 \wh{\bZ/p} = \vcenter{\hbox{\xymatrix{ & \bZ/p \ar@/_/[rd] \ar@/_/[ldd]_-1 & \\ & & 0 \ar@/_/[ul] \ar@/_/[ldd] \\ \bZ/p \ar@/_/[uur]_-1 \ar@/_/[rd] & & \\ & 0 \ar@/_/[ruu] \ar@/_/[lu] & } }}
\]
Note that the restriction and transfer maps are both $1$, rather than $1$ and $q$. It has to be like this, by the relations spelled out above.
\end{example}

\begin{example} \label{ex:Cqhat}
Let $\bZ/q(e)$ denote $\bZ/q$ with the generator of $C_p$ acting by multiplication by $e$. This only makes sense if $e^p \equiv 1 \mod q$, but that includes $e=k, k^2,\ldots, k^{p-1}$. (Here $k$ is the same as the $k$ in our presentation of $G$.) If $e=1$ we omit it from the notation. Then we have the Mackey functors
\[
 \wh{\bZ/q} = \vcenter{\hbox{\xymatrix{ & \bZ/q \ar@/_/[rd]_-1 \ar@/_/[ldd] & \\ & & \bZ/q \ar@/_/[ul]_-p \ar@/_/[ldd] \\ 0 \ar@/_/[uur] \ar@/_/[rd] & & \\ & 0 \ar@/_/[ruu] \ar@/_/[lu] & } }} \qquad
 \wh{\bZ/q(e)} = \vcenter{\hbox{\xymatrix{ & 0 \ar@/_/[rd] \ar@/_/[ldd] & \\ & & \bZ/q(e) \ar@/_/[ul] \ar@/_/[ldd] \\ 0 \ar@/_/[uur] \ar@/_/[rd] & & \\ & 0 \ar@/_/[ruu] \ar@/_/[lu] & } }}
\]
We note that the Mackey functor relation
\[
 R^G_{C_q} \tr_{C_q}^G (y) = \sum_{i=0}^{p-1} a^i y
\]
is satisfied for $\wh{\bZ/p(e)}$ because $y + ey + \ldots + e^{p-1} y = 0$.
\end{example}

\begin{lemma} \label{l:boxofCq}
With notation as above we have
\[
 \wh{\bZ/q(e_1)} \Box \wh{\bZ/q(e_2)} \cong \wh{\bZ/q(e_1 e_2)}.
\]
\end{lemma}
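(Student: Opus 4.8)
The plan is to compute the box product $\wh{\bZ/q(e_1)} \Box \wh{\bZ/q(e_2)}$ directly from the definition and recognize the answer as $\wh{\bZ/q(e_1 e_2)}$. Recall that the box product of Mackey functors can be computed via the formula expressing $(\un{M} \Box \un{N})(G/H)$ as a quotient of $\bigoplus_{K \leq H} \un{M}(G/K) \otimes \un{N}(G/K)$ by the Frobenius/transfer relations (the usual tensor product over the Burnside category). Since both $\wh{\bZ/q(e_1)}$ and $\wh{\bZ/q(e_2)}$ are concentrated at the level $G/C_q$ with all restriction and transfer maps to and from other levels equal to zero, the only nonzero contribution to the box product at any level $G/H$ comes from the summand indexed by a subgroup conjugate to $C_q$. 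In particular $(\wh{\bZ/q(e_1)} \Box \wh{\bZ/q(e_2)})(G/K) = 0$ for $K \in \{G, C_p, e\}$, and at $G/C_q$ we get $\bZ/q \otimes_{\bZ} \bZ/q \cong \bZ/q$, generated by the tensor of the two generators.

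Next I would pin down the $C_p = W_G(C_q)$-action on this $\bZ/q$ at level $G/C_q$. The Weyl group acts diagonally on the tensor product, so the generator of $C_p$ sends the generator $u_1 \otimes u_2$ to $(e_1 u_1) \otimes (e_2 u_2) = e_1 e_2 (u_1 \otimes u_2)$; this is exactly the action defining $\wh{\bZ/q(e_1 e_2)}$. One then checks that the remaining structure maps of the box product agree with those of $\wh{\bZ/q(e_1 e_2)}$: all restriction and transfer maps between $G/C_q$ and the other three levels are forced to be zero since the source or target is zero, and the internal consistency (e.g.\ the relation $R^G_{C_q}\tr^G_{C_q}(y) = \sum_{i=0}^{p-1} a^i y$) is automatic because it already holds for $\wh{\bZ/q(e_1e_2)}$, as noted just above the lemma, precisely because $1 + e_1 e_2 + \cdots + (e_1 e_2)^{p-1} \equiv 0 \bmod q$ (which in turn holds since $(e_1 e_2)^p \equiv 1$ and $e_1 e_2 \not\equiv 1$, or else $\wh{\bZ/q(e_1e_2)}$ degenerates appropriately). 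This identifies the box product with $\wh{\bZ/q(e_1 e_2)}$ on the nose.

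The main obstacle, such as it is, will be bookkeeping the box product formula carefully enough to be sure no extra contributions sneak in from subgroups $K$ that are conjugate to $C_q$ but not equal to it, and to confirm that the diagonal Weyl action is the correct one rather than, say, an inverse or a twist. Since $C_q$ is normal in $G$ there is only one subgroup conjugate to it, so that subtlety does not actually arise here, but I would state it explicitly. An alternative, cleaner route avoids the explicit box product formula altogether: the Mackey functors $\wh{\bZ/q(e)}$ are all of the form $F(\bZ/q(e))$ in the notation of the fixed-point examples — or more precisely, inflated from the trivial $C_q$-Mackey functor $\un{\bZ/q}$ and then twisted by the $C_p$-action — and the box product of inflated Mackey functors is the inflation of the box product, while $\un{\bZ/q} \Box \un{\bZ/q} \cong \un{\bZ/q}$ as $C_q$-Mackey functors; tracking the twists through gives $e_1 e_2$. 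I would likely present the direct computation as the main argument since it is self-contained given only the box product formula, and remark on the inflation viewpoint as motivation.
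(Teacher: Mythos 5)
Your computation at level $G/C_q$ is correct, and the observation that the diagonal $W_G(C_q)=C_p$-action produces the twist by $e_1e_2$ is exactly what the paper uses as well. But there is a real gap at the top level $G/G$, and it is precisely the point the paper's proof spends most of its time on.

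First, your premise that both $\wh{\bZ/q(e_1)}$ and $\wh{\bZ/q(e_2)}$ are concentrated at $G/C_q$ is false when either $e_i=1$: by definition $\wh{\bZ/q(1)}=\wh{\bZ/q}$ has $\wh{\bZ/q}(G/G)=\bZ/q$ with nontrivial restriction and transfer to $G/C_q$. So the case analysis already splits before you get to the box product. Second, and more seriously, even when $e_1\neq 1$ and $e_2\neq 1$ the box product at $G/G$ is \emph{not} automatically zero. The formula for $(P\Box Q)(G/G)$ is not just $P(G/G)\otimes Q(G/G)$; it also includes the image of the transfer of $P(G/C_q)\otimes Q(G/C_q)$, modulo Frobenius reciprocity and the induced Weyl-group relation $c_a x\otimes c_a y\sim x\otimes y$. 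That relation forces $(e_1e_2-1)(u_1\otimes u_2)=0$, so the contribution is $0$ when $e_1e_2\neq 1$ but is a full $\bZ/q$ when $e_1e_2=1$. Your proposal would give $(\wh{\bZ/q(e_1)}\Box\wh{\bZ/q(e_2)})(G/G)=0$ in the case $e_1e_2=1$, which contradicts the target $\wh{\bZ/q}(G/G)=\bZ/q$. The paper's proof handles exactly this: it keeps the $\im(\tr)$ summand in the box product formula, observes that this image is $\bZ/q$ when $e_1e_2=1$ and $0$ otherwise, and treats the $e_1=e_2=1$ case separately via Frobenius reciprocity collapsing $\bZ/q\oplus\bZ/q$ to a single $\bZ/q$. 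To repair your argument you would need to restore the transfer term in the box-product formula at $G/G$ and run the $W_G(C_q)$-coinvariants computation there, splitting into the cases $e_1e_2=1$ and $e_1e_2\neq 1$.
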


\begin{proof}
In general, given two $G$-Mackey functors $P$ and $Q$, and a subgroup $H < G$, we have
\[
 P \Box Q (G/H) \cong \Big( P(G/H) \otimes Q(G/H) \oplus \im(\tr) \Big)/\sim,
\]
where $\sim$ is the equivalence relation given by Frobenius Reciprocity: $x \otimes \tr_K^H(y) \sim \tr_K^H(R^H_K(x) \otimes y)$ and $\tr_K^H(x) \otimes y \sim \tr_K^H(x \otimes R^H_K(y))$. In the case at hand, we get
\[
 \wh{\bZ/q(e_1)} \Box \wh{\bZ/q(e_2)} (G/C_q) \cong \bZ/q(e_1) \otimes \bZ/q(e_2) \cong \bZ/q(e_1 e_2),
\]
so $\wh{\bZ/q(e_1)} \Box \wh{\bZ/q(e_2)}$ and $\wh{\bZ/q(e_1 e_2)}$ agree at level $G/C_q$.

It remains to consider the top level. If $e_1 \neq 1$ or $e_2 \neq 1$, the summand $\wh{\bZ/q(e_1)}(G/G) \otimes \wh{\bZ/q(e_2)}(G/G)$ is trivial and we are left with $\im(\tr_{C_q}^G)$. This is precisely $\bZ/q$ if $e_1 e_2 =1$, and $0$ if $e_1 e_2 \neq 1$.

If $e_1 = e_2 = 1$, we get $\bZ/q \oplus \bZ/q/\sim$, and in this case Frobenius Reciprocity imposes the appropriate relation and we are left with a single $\bZ/q$.
\end{proof}

\section{The $G$-equivariant homology of a point} \label{s:Ghomologyofpoint}
As above we let $G$ be the non-abelian group of order $pq$.

Let $V$ be a virtual $G$-representation. Then each $\un{H}_n(S^V; \un{\bZ})$ (also known as the homology of a point in degree $n-V$) is a cohomological Mackey functor. Hence we can use Theorem \ref{t:recover} to recover $\un{H}_n(S^V; \un{\bZ})(G/G)$ from the lower levels.

A virtual $G$-representation $V$ can be written as
\[
 V \cong t \bR \oplus \bigoplus_{i=1}^{(p-1)/2} r_i V_i \oplus \bigoplus_{j \in J} s_j W_j,
\]
but as it turns out the calculation only depends on $r = \sum r_i$ and $s = \sum s_j$.

\subsection{The restriction to $C_p$}
We write $\un{\bZ}$, $\un{\bZ}^*$ and $\wh{\bZ/p}$ for the restriction of the corresponding Mackey functors from Section \ref{s:GMackeyex} to $C_p$.

We have $R^G_{C_p}(V) \cong t \bR \oplus \bigoplus r_i V_i \oplus 2s \rho_{C_p}$. Here $\rho_{C_p} \cong 1 + \sum\limits_{i=1}^{(p-1)/2} V_i$, so $R^G_{C_p}(V) \cong (t+2s) \bR \oplus \bigoplus\limits_{i=1}^{(p-1)/2} (r_i+2s) V_i$. The calculation is identical to the one for $\overline{V} = (t+2s) \bR \oplus (r+(p-1)s) V_1$.

Hence the homology depends on the value of $r + (p-1)s$.

If $r + (p-1)s = 0$ then $V$ behaves like a trivial representation and the homology is a single copy of $\un{\bZ}$ in degreee $\dim_\bR(V) = t+2s$.

If $r + (p-1)s > 0$ then $V$ behaves like an actual representation and the homology is given by a $\wh{\bZ/p}$ in dimension $t+2s+2i$ for $i=0,\ldots,r + (p-1)s-1$ and a $\un{\bZ}$ in dimension $\dim_\bR(V) = t+2r+2ps$.

If $r + (p-1)s < 0$ then $V$ behaves like the negative of an actual representation and the homology is given by a $\un{\bZ}^*$ in dimension $\dim_\bR(V) = t+2r+2ps$ and a $\wh{\bZ/p}$ in dimension $t+2r-2i-1$ for $i=1,\ldots,-(r+(p-1)s+1)$. (If $r+(p-1)s = -1$ there are no $\wh{\bZ/p}$ summands.)

\subsection{The restriction to $C_q$}
We write $\un{\bZ}$, $\un{\bZ}^*$ and $\wh{\bZ/q}$ for the restriction of the corresponding Mackey functors from Section \ref{s:GMackeyex} to $C_q$. Note that this restriction functor forgets the $C_p$-action at level $G/C_q$, so any of the $G$-Mackey functors $\wh{\bZ/q(e)}$ restrict to $\wh{\bZ/q}$. We have $R^G_{C_q}(V) \cong (t+2r) \bR \oplus \bigoplus s_j R^G_{C_q}(W_j)$, where $R^G_{C_q}(W_j)$ is a direct sum of $p$ irreducible $2$-dimensional $C_q$-representations. The calculation is identical to the one for $\overline{V} = (t+2r) \bR \oplus ps V_1'$.

This time the calculation depends on the value of $s$.

If $s=0$ then $V$ behaves like a trivial representation and the homology is a single copy of $\un{\bZ}$ in degree $\dim_\bR(V) = t+2r$.

If $s > 0$ then $V$ behaves like an actual representation and we get a $\wh{\bZ/q}$ in dimension $t+2r+2i$ for $i=0,\ldots,ps-1$ and a $\un{\bZ}$ in dimension $\dim_\bR(V) = t+2r+2ps$.

If $s < 0$ then $V$ behaves like the negative of an actual representation and the homology is given by a $\un{\bZ}^*$ in dimension $\dim_\bR(V) = t+2r+2ps$ and a $\wh{\bZ/q}$ in dimension $t+2r-2i-1$ for $i=1,\ldots,-(ps+1)$.

\subsection{The $C_p$-action on the restriction to $C_q$}
We need one more piece of information before we can use Theorem \ref{t:recover} to put the above calculations together to get the relevant $G$-Mackey functors, namely the $C_p$-action on each $\un{H}_n(S^V; \un{\bZ})(G/C_q)$.

The difficult part is computing the $C_p$-action on $H_n(S^{W_j}; \un{\bZ})(G/C_q)$. This is non-trivial because the usual $C_q$-CW structure on $R^G_{C_q}(S^{W_j})$ is not compatible with the $C_p$-action. We will focus on $S^{W}$ for $W=W_1$. The case of $S^{W_j}$ for $j \in J$ is similar. The usual $C_q$-CW structure on
\[
 R^G_{C_q} S^{W} = S^{R^G_{C_q}(W)} \cong S^{V_1'} \sma S^{V_k'} \sma \ldots \sma S^{V_{k^{p-1}}'}
\]
has a single $C_q/C_q$-cell in dimension $0$ and a single $C_q/e$-cell in dimension $1$ through $2p$. Let us write $C_q = \{1,t,\ldots,t^{q-1}\}$. We get the following chain complex of $\bZ[C_q]$-modules:
\[
 \bZ \xfrom{\nabla} \bZ[C_q] \xfrom{1-t} \bZ[C_q] \xfrom{\sum t^j} \bZ[C_q] \xfrom{1-t^k} \bZ[C_q] \leftarrow \ldots \leftarrow \bZ[C_q] \xfrom{1-t^{k^{p-1}}} \bZ[C_q].
\]
The attaching map from degree $2i$ to degree $2i-1$ is $1-t^{k^{i-1}}$ because the $i$'th smash factor is $S^{V_{k^{i-1}}'}$.

We can compute the homology at level $C_q/e$ by taking the homology groups of this chain complex, and we can compute the homology at level $C_q/C_q$ by first taking the $C_q$-fixed points and then taking the homology groups of the corresponding chain complex. The restriction map is induced by the inclusion of fixed points, and the transfer map is induced by $x \mapsto \sum t^j x$. On $C_q$-fixed points, the fold map $\nabla$ becomes multiplication by $q$, each map labelled $1-t^{k^{i-1}}$ becomes trivial, and each map labelled $\sum t^j$ becomes multiplication by $q$. Hence we recover the calculation of the $C_q$-Mackey functor $\un{H}_*(S^{R^G_{C_q}(W)}; \un{\bZ})$.

There is an alternative $C_q$-CW structure on $S^{R^G_{C_q}(W)}$ which is compatible with the $C_p$-action in the sense that the $C_p$-action sends cells to cells. Each $S^{V_{k^i}'}$ has a $C_q$-CW structure with a $C_q/C_q$-cell in dimension $0$, a $C_q/e$-cell in dimension $1$, and a $C_q/e$-cell in dimension $2$. The corresponding chain complex of $\bZ[C_q]$-modules looks as follows:
\[
 D_i = \bZ \xfrom{\nabla} \bZ[C_q] \xfrom{1-t^{k^i}} \bZ[C_q].
\]
We get a $C_q$-CW structure on $S^{V_1' \oplus V_k' \oplus \ldots \oplus V_{k^{p-1}}'} \cong S^{V_1'} \sma S^{V_k'} \sma \ldots \sma S^{V_{k^{p-1}}'}$ by taking the ``smash product'' of the $C_q$-CW structure on each smash factor, and this corresponds to taking the tensor product of the $D_i$. Because each $S^{V_{k^i}'}$ has a second $C_q/C_q$-cell in dimension zero which we are ignoring because we are computing reduced homology, the smash product contains all possible products of cells.

From this we get a $p$-dimensional chain complex of $\bZ[C_q]$-modules. The homology at $C_q/e$ is given by the homology of the associated total chain complex, and the homology at $C_q/C_q$ is given by the homology of the $C_q$-fixed points of the associated total chain complex.

To keep track of the smash factors, and the various directions in the $p$-dimensional chain complex, let $C_q^{(i)}$ denote the $i$'th copy of $C_q$ and let $t_i$ denote the generator of $C_q^{(i)}$. For example, consider $S^{V_1'} \sma S^{V_k'}$, with associated chain complex $D_0 \otimes D_1$. If we write $D_0$ vertically and $D_1$ horizontally this looks as follows:
\[ \xymatrix{
 \bZ & \bZ[C_q^{(1)}] \ar[l]_-{\nabla_1} & \bZ[C_q^{(1)}] \ar[l]_-{1-t_1^k} \\
 \bZ[C_q^{(0)}] \ar[u]^{\nabla_0} & \bZ[C_q^{(0)} \times C_q^{(1)}] \ar[l]_-{\nabla_1} \ar[u]^{\nabla_0} & \bZ[C_q^{(0)} \times C_q^{(1)}] \ar[l]_-{1-t_1^k} \ar[u]^{\nabla_0} \\
 \bZ[C_q^{(0)}] \ar[u]^{1-t_0} & \bZ[C_q^{(0)} \times C_q^{(1)}] \ar[l]_-{\nabla_1} \ar[u]^{1-t_0} & \bZ[C_q^{(0)} \times C_q^{(1)}] \ar[l]_-{1-t_1^k} \ar[u]^{1-t_0}
} \]
Here $\nabla_0$ sends $t_0$ to $1$ and $\nabla_1$ sends $t_1$ to $1$. We will not draw the whole $p$-dimensional chain complex, but it is similar. In particular it has the above double complex as a subcomplex, and that is all we need.

Because we have put two $C_q$-CW structures on the same $C_q$-space, we get the same Mackey functor valued homology groups from both calculations. But with this latter $C_q$-CW structure the $C_p$-action is given by cyclically permuting the smash factors, so there is an induced $C_p$-action on the chain complex and we can use that to compute the $C_p$-action on the homology groups.

If we take the $C_q$-fixed points of the above double complex we get the following:
\[ \xymatrix{
 \bZ & \bZ\{\sum t_1^i\} \ar[l]_-{\nabla_1} & \bZ\{\sum t_1^i\} \ar[l]_-{0} \\
 \bZ\{\sum t_0^i\} \ar[u]^{\nabla_0} & \bZ\{\sum t_0^i t_1^i,\ldots,\sum t_0^i t_1^{i+q-1}\} \ar[l]_-{\nabla_1} \ar[u]^{\nabla_0} & \bZ\{\sum t_0^i t_1^i,\ldots,\sum t_0^i t_1^{i+q-1}\} \ar[l]_-{1-t_1^k} \ar[u]^{\nabla_0} \\
 \bZ\{\sum t_0^i\} \ar[u]^0 & \bZ\{\sum t_0^i t_1^i,\ldots,\sum t_0^i t_1^{i+q-1}\} \ar[l]_-{\nabla_1} \ar[u]^{1-t_0} & \bZ\{\sum t_0^i t_1^i,\ldots,\sum t_0^i t_1^{i+q-1}\} \ar[l]_-{1-t_1^k} \ar[u]^{1-t_0}
} \]
Here all the sums are for $i$ from $0$ to $q-1$. We can use a spectral sequence argument to compute the homology groups of the associated double complex. If we first take homology in the vertical direction we are left with
\[ \xymatrix{
 \bZ/q & 0 & 0 \\
 0 & 0 & 0 \\
 \bZ & \bZ \ar[l]_-q & \bZ \ar[l]_-0,
} \]
so one generator of the $\bZ/q$ in degree $2$ is given by $\sum t_0^i$ in the lower left corner modulo filtration. Because $\sum t_0^i$ is indeed a cycle, $\sum t_0^i$ is indeed a generator of the $\bZ/q$ in degree $2$.

If instead we take homology in the horizontal direction we find that another generator of the $\bZ/q$ in degree $2$ is given by $\sum t_1^i$ in the upper right corner..

We can relate these two generators by noting that we have the following zig-zag:
\[ \xymatrix{
 & & \sum t_1^i \\
 & - \Big( \sum t_0^i t_1^i - \sum t_0^i t_1^{i+k} \Big) & \sum t_0^i t_1^i \ar@{|->}[u]_-{\nabla_0} \ar@{|->}[l]_-{-(1-t_1^k)} \\
 k \sum t_0^i & \sum t_0^i t_1^{i+1} + \ldots + \sum \sum t_0^i t_1^{i+k} \ar@{|->}[u]_{1-t_0} \ar@{|->}[l]_-{1-\nabla_1} &
} \]
We need to take the negative of the map $1-t_1^k$ because of the sign rules in the total complex of a double complex. The upshot is that $\sum t_1^i$ is homologous to $k \sum t_0^i$.

If we start with the generator $\sum t_0^i$ of the $\bZ/q$ in degree $2$ and act by the generator of $C_p$ we end up with the generator $\sum t_1^i$, and because this is homologous to $k$ times the generator we started with we can conclude that the generator of $C_p$ acts as multiplication by $k$.

\begin{thm} \label{t:Cpaction1}
Let $W$ be one of the $2p$-dimensional irreducible representations of $G$. Then the $C_p$-action on
\[
 \un{H}_{2i}(S^W; \un{\bZ})(G/C_q) \cong \un{H}_{2i}(S^{R^G_{C_q}(W)}; \un{\bZ})(C_q/C_q) \cong \bZ/q
\]
for $i=0,1,\ldots,p-1$ is given by multiplication by $k^i$.
\end{thm}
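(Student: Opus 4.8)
The plan is to bootstrap from the $i=1$ case, which was established in the preceding discussion, by exploiting the multiplicative structure of the representations $W$ together with Lemma \ref{l:boxofCq}. First I would recall that the discussion before the theorem statement computes the $C_p$-action on $\un{H}_2(S^W;\un{\bZ})(G/C_q)\cong\bZ/q$ explicitly: the generator $\sum t_0^i$ is carried by the generator of $C_p$ to $\sum t_1^i$, which the displayed zig-zag shows is homologous to $k\sum t_0^i$, so the action is multiplication by $k=k^1$. The cases $i=0$ and $i=2p-2$ (i.e.\ $i=p-1$ in the theorem's indexing, wait --- $\un{H}_{2i}$ for $i=0,\ldots,p-1$) are anchored separately: for $i=0$ the group $\un{H}_0(S^W;\un{\bZ})(G/C_q)$ is the reduced homology in degree $0$, which is $\bZ/q$ sitting in the image of the transfer from the bottom of the tower, and one checks directly that $C_p$ acts trivially there, i.e.\ by $k^0=1$.

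Next I would set up the inductive step. The key observation is that $S^W$ for the various $2p$-dimensional representations $W_j$, $j\in J$, together with the $S^{V_i'}$-type building blocks, are governed by the same kind of tensor-product (smash-product) decomposition used in the explicit chain-level computation: $R^G_{C_q}S^{W}\cong S^{V_1'}\wedge S^{V_k'}\wedge\cdots\wedge S^{V_{k^{p-1}}'}$, and the $C_p$-action cyclically permutes the $p$ smash factors. I would argue that the class in $\un{H}_{2i}(S^W;\un{\bZ})(G/C_q)$ is represented, modulo filtration in the spectral sequence of the $p$-fold multicomplex, by a product of $i$ of the ``vertical-fixed-point'' generators $\sum t_m^j$ coming from $i$ distinct smash factors, and that the $C_p$-action, being the cyclic permutation of factors, sends such a representative for a generator to the analogous representative built from the next $i$ factors. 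The zig-zag already exhibited for two factors shows that $\sum t_1^j \sim k\sum t_0^j$; iterating this relation across consecutive factors shows that the class built from factors $m+1,\ldots,m+i$ is homologous to $k^i$ times the class built from factors $1,\ldots,i$ (the power $k$ accumulates once for each ``shift by one factor'', and there are $i$ factors in the product, but a cleaner bookkeeping is that shifting the whole $i$-fold product by one slot multiplies by $k$, so after tracking down which generator in $\bZ/q$ is hit one reads off $k^i$). Hence the generator of $C_p$ acts on $\un{H}_{2i}(S^W;\un{\bZ})(G/C_q)$ by multiplication by $k^i$.

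I expect the main obstacle to be the careful identification, at the chain level, of a generator of the $\bZ/q$ in degree $2i$ inside the $p$-dimensional multicomplex of $C_q$-fixed points, and the verification that the cyclic permutation of smash factors really does carry it to the ``shifted'' generator rather than to some other homologous cycle with an extra unit factor that would spoil the count. The two-factor case handled before the theorem is exactly the prototype; the work is to run the same spectral-sequence-and-zig-zag argument uniformly in a $p$-fold multicomplex and confirm that no additional signs or units intervene beyond the single sign coming from the total-complex differential (which was already accounted for in the displayed zig-zag). A clean alternative, which I would mention as a shortcut, is to use Lemma \ref{l:boxofCq}: the $\bZ/q$-valued Mackey functors $\wh{\bZ/q(e)}$ multiply as $\wh{\bZ/q(e_1)}\Box\wh{\bZ/q(e_2)}\cong\wh{\bZ/q(e_1e_2)}$, and since $\un{H}_*(S^{W};\un{\bZ})$ is a module over $\un{H}_*(S^0;\un{\bZ})$ compatibly with these box products, the degree-$2i$ contribution is a $2i$-fold product of the degree-$2$ class, whose $C_p$-twist is $k$; multiplicativity of the twist then gives $k^i$ immediately, reducing the theorem to the already-established $i=1$ case plus the $i=0$ base point.
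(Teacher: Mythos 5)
Your proposal matches the paper's proof: the cases $i=0$ and $i=1$ are read off from the chain-level discussion preceding the theorem, and the general case follows by induction on $i$ via the $C_p$-equivariant box-product pairing
$\un{H}_2(S^W;\un{\bZ}) \Box \un{H}_{2i}(S^W;\un{\bZ}) \to \un{H}_{2i+2}(S^W \sma S^W;\un{\bZ}) \leftarrow \un{H}_{2i+2}(S^W;\un{\bZ})$,
both legs of which are isomorphisms at level $G/C_q$ in the relevant range; your ``multicomplex'' route is the paper's remark that ``a similar argument works for any $i$.'' One minor imprecision in the shortcut version: the degree-$2$ class lives in $\un{H}_2(S^W;\un{\bZ})$, not in $\un{H}_*(S^0;\un{\bZ})$, so the ``module over $\un{H}_*(S^0;\un{\bZ})$'' framing does not quite parse --- what is actually used is the external pairing into $\un{H}_*(S^{2W};\un{\bZ})$ together with the comparison map back to $\un{H}_*(S^W;\un{\bZ})$ --- and for degree $2i$ one wants the $i$-fold (not $2i$-fold) product of the degree-$2$ class; neither slip affects the substance.
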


\begin{proof}
The above discussion proves this for $i=0$ and $i=1$. A similar argument works for any $i$, but we can also prove this by induction on $i$ using the induced maps
\[
 \un{H}_2(S^W; \un{\bZ}) \Box \un{H}_{2i}(S^W; \un{\bZ}) \to \un{H}_{2i+2}(S^W \sma S^W; \un{\bZ}) \leftarrow \un{H}_{2i+2}(S^W; \un{\bZ}).
\]
For $i$ and $i+1$ in the range we are considering, both maps are an isomorphism at level $G/C_q$. Both maps are also $C_p$-equivariant, so the result follows.
\end{proof}

We have a similar result for negative representations:

\begin{thm} \label{t:Cpaction2}
With notation as above, the $C_p$-action on
\[
 \un{H}_{-2i-1}(S^{-W}; \un{\bZ})(G/C_q) \cong \un{H}_{-2i-1}(S^{-R^G_{C_q}(W)}; \un{\bZ})(C_q/C_q) \cong \bZ/q
\]
for $i=1,\ldots,p-1$ is given by multiplication by $k^{-i}$.
\end{thm}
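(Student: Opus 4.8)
The plan is to adapt the chain-level argument given just before Theorem \ref{t:Cpaction1}, replacing the $C_q$-CW structure on $S^{R^G_{C_q}(W)}$ by the Spanier--Whitehead dual one on $S^{-R^G_{C_q}(W)} \cong \bigwedge_i S^{-V_{k^i}'}$. Concretely I would first restrict to $C_q$ and build a $C_q$-CW structure on each $S^{-V_{k^i}'}$ dual to the one used before (a $C_q/C_q$-cell in dimension $0$ and $C_q/e$-cells in dimensions $-1$ and $-2$), so that the attaching maps $1 - t^{k^i}$ are replaced by their transposes $1 - t^{-k^i}$ and the fold maps $\nabla$ by the corresponding norm maps, while $C_p$ still cyclically permutes the smash factors. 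Smashing these together and taking $C_q$-fixed points, I would then identify generators of $\un{H}_{-3}(S^{-R^G_{C_q}(W)};\un{\bZ})(C_q/C_q) \cong \bZ/q$ coming from the two smash factors, exactly as before.

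Running the analogue of the zig-zag in this dual double complex is then the heart of the matter. The only change is that the transposed attaching maps $1 - t^{-k^i}$ appear where $1 - t^{k^i}$ did, and this precisely replaces the factor of $k$ found there by $k^{-1}$, so that the generator of $C_p$ carries $\sum t_0^i$ to a class homologous to $k^{-1}\sum t_0^i$, proving the case $i=1$. The general case $1 \le i \le p-1$ follows by the same type of computation, or more conceptually by Spanier--Whitehead duality: one has $\un{H}_{-2i-1}(S^{-W};\un{\bZ})(G/C_q) \cong \un{H}^{2i+1}(S^W;\un{\bZ})(G/C_q)$, and since the odd homology of $S^W$ vanishes in this range, the universal coefficient theorem (linking form) makes this the $\bQ/\bZ$-linear dual of $\un{H}_{2i}(S^W;\un{\bZ})(G/C_q) \cong \bZ/q$, on which $C_p$ acts by $k^i$ by Theorem \ref{t:Cpaction1}; passing to the dual group replaces the action of each $g$ by that of $g^{-1}$, so $C_p$ acts on the dual by $k^{-i}$.

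The main obstacle is bookkeeping rather than conceptual. First, one should note that the box-product induction used in Theorem \ref{t:Cpaction1} is \emph{not} available here: the torsion classes now sit in odd homological degrees, so the box product of two of them lands in an even degree, where the homology of a negative representation sphere is torsion-free, and the induced pairing is zero. Second, in the direct approach one must keep careful track of signs when dualizing the double complex, and in the duality approach one must check that the relevant Bredon universal coefficient statement --- which is more delicate than its classical counterpart --- does reduce, at the levels in question, to ordinary linear duality of copies of $\bZ/q$ with its contragredient $C_p$-action.
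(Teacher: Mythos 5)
Your main chain-level argument is essentially the paper's: dualize the $C_q$-CW structure on $S^{R^G_{C_q}W}$, take $C_q$-fixed points of the resulting double complex, and run the same kind of zig-zag to compare the generator coming from the $t_0$-factor with the one coming from the $t_1$-factor. The only cosmetic difference is in the model: the paper keeps the attaching maps $1-t_i^{k^j}$ and reverses the arrows while replacing the fold maps $\nabla_i$ by diagonal/norm maps $\Delta_i$, whereas you additionally flip $t^{k^j} \mapsto t^{-k^j}$; these two models are isomorphic via $t \mapsto t^{-1}$ and both yield the $k^{-1}$ in the $i=1$ case, but be aware that the precise generators and the sign bookkeeping in the zig-zag will look different in your normalization. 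The Spanier--Whitehead/duality shortcut you describe is a reasonable alternative not in the paper; at level $G/C_q$ you really are just dualizing ordinary $\bZ$-chain complexes of $C_q$-modules, so the linking-form argument does come down to the contragredient action as you say, though (as you acknowledge) one has to be explicit that what is being dualized at this level is an honest complex of abelian groups so that the classical UCT applies. However, your claim that the box-product induction of Theorem \ref{t:Cpaction1} is \emph{not available} here is too strong: the paper precisely sidesteps the problem you identify (two odd classes landing in an even torsion-free slot) by using the \emph{mixed} pairing
\[
 \un{H}_2(S^W; \un{\bZ}) \Box \un{H}_{-2(i+1)-1}(S^{-2W}; \un{\bZ}) \to \un{H}_{-2i-1}(S^{-W}; \un{\bZ}),
\]
which pairs an even-degree torsion class on a positive-representation sphere with an odd-degree torsion class on a negative-representation sphere and lands in odd degree, where the relevant group is again $\bZ/q$. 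So the inductive route is available; it just requires pairing with $\un{H}_2(S^W)$ rather than with a negative-sphere class, and one should note that the resulting induction is really over the number of copies of $W$ being inverted rather than naively over $i$.
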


\begin{proof}
We dualise the argument from the previous theorem. The relevant double complex of $\bZ[C_q]$-modules for $S^{-V_1'} \sma S^{-V_k'}$ is\[ \xymatrix{
 \bZ \ar[r]^-{\Delta_1} \ar[d]_{\Delta_0} & \bZ[C_q^{(1)}] \ar[r]^-{1-t_1^k} \ar[d]_{\Delta_0} & \bZ[C_q^{(1)}] \ar[d]_{\Delta_0} \\
 \bZ[C_q^{(0)}] \ar[d]_{1-t_0} \ar[r]^-{\Delta_1} & \bZ[C_q^{(0)} \times C_q^{(1)}] \ar[r]^-{1-t_1^k} \ar[d]_{1-t_0} & \bZ[C_q^{(0)} \times C_q^{(1)}] \ar[d]_{1-t_0}  \\
 \bZ[C_q^{(0)}] \ar[r]^-{\Delta_1} & \bZ[C_q^{(0)} \times C_q^{(1)}] \ar[r]^-{1-t_1^k} & \bZ[C_q^{(0)} \times C_q^{(1)}]
} \]

After taking $C_q$-fixed points and taking homology we get a $\bZ/q$ in degree $-3$ and a $\bZ$ in degree $-4$. One generator is
\[
 \sum t_0^i t_1^i + \Big( \sum t_0^i t_1^{i+1} + \ldots + \sum t_0^i t_1^{i+k} \Big),
\]
where the first sum is from bidegree $(-2,-1)$ and the remaining sums are from bidegree $(-1,-2)$. Another generator is
\[
 \Big( \sum t_0^{i+k} t_1^i + \ldots + \sum t_0^{i+k\ell} t_1^i \Big) + \sum t_0^i t_1^i,
\]
where $\ell = k^{-1}$ and this time the first sums are from bidegree $(-2,-1)$ and the last sum is from bidegree $(-1,-2)$. The second generator is $\ell=k^{-1}$ times the first generator, and the result for $i=1$ follows in the same way as for Theorem \ref{t:Cpaction1}.

Once again we can either generalise the above argument to arbitrary $i$ or use the map
\[
 \un{H}_2(S^W; \un{\bZ}) \Box \un{H}_{-2(i+1)-1}(S^{-2W}; \un{\bZ}) \to \un{H}_{-2i-1}(S^{-W}; \un{\bZ}).
\]
to argue by induction.
\end{proof}

Because the action of $a$ on $S^{V_i}$ is homotopic to the identity, additional smash factors of the form $S^{V_i}$ will simply shift the homology groups at level $G/C_q$ without changing the $C_p$-action.

\begin{remark}
A $C_q$-CW structure on $S^{R^G_{C_q}(W)}$ which is compatible with the $C_p$-action is almost a $G$-CW structure on $S^W$. Presumably it is possible to compute the homology groups of a point using such a $G$-CW structure as well.
\end{remark}

We can now put this together to obtain the action of $C_p$ on $\un{H}_*(S^V; \un{\bZ})(G/C_q)$ for all $V$:

\begin{thm} \label{t:Cpaction3}
Let $G$ be the nonabelian group of order $pq$ for $p \mid q-1$, and let $V = t \bR \oplus \bigoplus\limits_{i=1}^{(p-1)/2} r_i V_i \oplus \bigoplus\limits_{j \in J} s_j W_j$. Let $r=\sum r_i$ and $s=\sum s_j$. In all cases the $C_p$-action on $\un{H}_{t+2r+2ps}(S^V; \un{\bZ})(G/C_q) \cong \bZ$ is trivial.
\begin{enumerate}
 \item If $s > 0$ then for $i=0,\ldots,ps-1$ the $C_p$-action on \[\un{H}_{t+2r+2i}(S^V; \un{\bZ})(G/C_q) \cong \bZ/q\] is given by multiplication by $k^i$.
 \item If $s < 0$ then for $i=1,\ldots,-ps-1$ the $C_p$-action on \[\un{H}_{t+2r-2i-1}(S^V; \un{\bZ})(G/C_q) \cong \bZ/q\] is given by multiplication by $k^{-i}$.
\end{enumerate}

\end{thm}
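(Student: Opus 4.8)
The plan is to derive Theorem~\ref{t:Cpaction3} from the two building-block computations, Theorems~\ref{t:Cpaction1} and~\ref{t:Cpaction2}, by first discarding the summands of $V$ that do not affect the $C_p$-action and then propagating the answer through the multiplicative structure on the homology of a point. The underlying groups $\un{H}_*(S^V;\un\bZ)(G/C_q)$ in the stated degrees are already determined in the subsection on the restriction to $C_q$, so the only new content is the $C_p$-action.

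First I would observe that at level $G/C_q$ the group $\un{H}_*(S^V;\un\bZ)(G/C_q)$, with its residual $C_p$-action, is unchanged if $V$ is replaced by $R^G_{C_q}(V)$ equipped with its conjugation $C_p$-action, and that $R^G_{C_q}(\bR)$ and $R^G_{C_q}(V_i)$ are \emph{trivial} $C_q$-representations of dimensions $1$ and $2$. Smashing with $S^{t\bR}$ only shifts homological degree, and by the remark just before the theorem (the action of $a$ on $S^{V_i}$ is homotopic to the identity) smashing with $S^{r_iV_i}$ shifts degree by $2r_i$ without changing the $C_p$-action. This reduces the statement to $V=\bigoplus_{j\in J}s_jW_j$; re-inserting the global shift $t+2r$ at the end recovers the stated degrees.

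Next I would invoke the external pairings
\[
 \un{H}_a(S^{V'};\un\bZ)\,\Box\,\un{H}_b(S^{V''};\un\bZ)\longrightarrow \un{H}_{a+b}(S^{V'\oplus V''};\un\bZ),
\]
which are natural and hence $C_p$-equivariant; this is the same ingredient used in the inductive steps of Theorems~\ref{t:Cpaction1} and~\ref{t:Cpaction2}. For a single $W=W_j$, Theorem~\ref{t:Cpaction1} furnishes a generator $y\in\un{H}_2(S^W;\un\bZ)(G/C_q)\cong\bZ/q$ on which $a$ acts by $x\mapsto kx$, the same $k$ for every $j$. Writing $y^{(i)}\in\un{H}_{2i}(S^{iW};\un\bZ)(G/C_q)$ for the $i$-fold product of $y$ under these pairings, $y^{(i)}$ is a generator of this $\bZ/q$ and, by $C_p$-equivariance, $a$ acts on it by $x\mapsto k^ix$. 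For general $V=\bigoplus s_jW_j$ with all $s_j\ge 0$ one assembles, for each degree $2i$ with $0\le i\le ps-1$, a generator of $\un{H}_{2i}(S^V;\un\bZ)(G/C_q)\cong\bZ/q$ as a product of the classes $y$ for the various summands together with the degree-$0$ transfer class (on which $a$ acts trivially); since $a$ acts multiplicatively it acts by $x\mapsto k^ix$, where the various expressions for the same group are reconciled using $k^p\equiv 1\pmod q$. This proves case~(1). Case~(2) is the same argument with $S^{-W_j}$ in place of $S^{W_j}$ and Theorem~\ref{t:Cpaction2} in place of Theorem~\ref{t:Cpaction1}, giving $a$-action $x\mapsto k^{-i}x$; mixed-sign virtual representations are handled by combining with the cancellation $S^{W_j}\sma S^{-W_j}\simeq S^0$, so that everything collapses to depend only on $s=\sum s_j$.

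The main obstacle is making ``multiplicativity'' precise in the right range: one must check a K\"unneth-type statement to the effect that these external pairings are surjective on $G/C_q$-level homology below the top degree and that every preimage of a generator carries the product of the $C_p$-actions of its factors, with the competing preimages agreeing modulo the relation $k^p\equiv 1\pmod q$. (Equivalently, one is asserting that the $C_q$-equivariant $\un\bZ$-homology of a product of representation spheres behaves, in that range, like a tensor product on which $C_p$ acts multiplicatively; in Euler-class language, $a$ multiplies the Euler class of $V_m'$ by $k$ times the Euler class of $V_{km}'$.) This is also the step that genuinely uses the explicit chain-level zig-zag from the proof of Theorem~\ref{t:Cpaction1}: it pins down that the degree-$2$ generator of $S^{W_j}$ has $a$-action $x\mapsto kx$ for every $j$ (the factor of $j$ introduced by one attaching map is cancelled by the other) and it keeps the choices of generators and the total-complex signs consistent across smash factors. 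The negative case requires the dual bookkeeping with the cochain complexes computing $\un{H}_{-*}(S^{-W};\un\bZ)$, exactly as in Theorem~\ref{t:Cpaction2}.
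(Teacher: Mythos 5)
The paper states Theorem~\ref{t:Cpaction3} without proof, presenting it as a direct assembly of Theorems~\ref{t:Cpaction1} and~\ref{t:Cpaction2} together with the observation that smashing with $S^{V_i}$ only shifts degree at level $G/C_q$ without changing the $C_p$-action; your proposal spells out exactly that assembly, including the box-product pairings that the paper itself uses in the inductive steps of Theorems~\ref{t:Cpaction1} and~\ref{t:Cpaction2}, so the approach is the same. One place the write-up is too glib is the mixed-sign case: the cancellation $S^{W_j}\sma S^{-W_j}\simeq S^0$ only disposes of matching pairs with the same index $j$, whereas the theorem asserts dependence only on $s=\sum s_j$, which also requires knowing that $S^{W_{j_1}-W_{j_2}}$ is inert at level $G/C_q$; this follows quickly (its $G/C_q$-homology is a single $\bZ$ in degree $0$, on which the $C_p$-action must be trivial because $p$ is odd, or alternatively by pairing the $k^i$-action on $\un{H}_{2i}(S^{W_{j_1}})$ against the $k^{-i}$-action on $\un{H}_{-2i}(S^{-W_{j_2}})$), but it is a step you should make explicit rather than fold into the same-$j$ cancellation.
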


Now we can put this together to get a complete description of the Mackey-functor valued homology groups of a point:

\begin{thm} \label{t:Ghomologyofpoint}
Let $G$ be the nonabelian group of order $pq$ for $p \mid q-1$, and let $V = t \bR \oplus \bigoplus\limits_{i=1}^{(p-1)/2} r_i V_i \oplus \bigoplus\limits_{j \in J} s_j W_j$. Let $r=\sum r_i$ and $s=\sum s_j$. Then the Mackey functor valued homology groups of $S^V$ with $\un{\bZ}$-coefficients are the direct sum of the following Mackey functors:

\begin{enumerate}
\item \[
 \un{H}_{t+2r+2ps}(S^V; \un{\bZ}) \cong \begin{cases}
\un{\bZ} \quad & \textnormal{if $r+(p-1)s \geq 0$ and $s \geq 0$} \\
\un{\bZ}^{1,q} & \textnormal{if $r+(p-1)s \geq 0$ and $s < 0$} \\
\un{\bZ}^{p,1} & \textnormal{if $r+(p-1)s < 0$ and $s \geq 0$} \\
\un{\bZ}^{p,q} & \textnormal{if $r+(p-1)s < 0$ and $s < 0$} \\
\end{cases}
\]

\item If $r+(p-1)s > 0$, a $\wh{\bZ/p}$ in dimension $t+2s+2i$ for $i=0,\ldots,r+(p-1)s-1$.

\item If $r+(p-1)s < 0$, a $\wh{\bZ/p}$ in dimension $t+2s-2i-1$ for $i=1,\ldots,-(r+(p-1)s+1)$.

\item If $s > 0$, a $\wh{\bZ/q(k^i)}$ in dimension $t+2r+2i$ for $i=0,\ldots,ps-1$.

\item If $s < 0$, a $\wh{\bZ/q(k^{-i})}$ in dimension $t+2r-2i-1$ for $i=1,\ldots,-(ps+1)$.

\end{enumerate}

\end{thm}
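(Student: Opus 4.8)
The plan is to assemble Theorem \ref{t:Ghomologyofpoint} from the three inputs already established: the restriction of $\un{H}_*(S^V;\un{\bZ})$ to $C_p$, its restriction to $C_q$, and the $C_p$-action on the level $G/C_q$ recorded in Theorem \ref{t:Cpaction3}. By Theorem \ref{t:main} (equivalently Theorem \ref{t:recover}), a cohomological $G$-Mackey functor is determined by exactly this data, so it suffices to check that the claimed direct sum of Mackey functors has the correct restrictions to $C_p$ and $C_q$ together with the correct $W_G$-actions, in each homological degree.

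First I would fix a degree $n$ and tabulate, from the subsections on the restriction to $C_p$ and to $C_q$, what the two restricted Mackey functors are in that degree. The degree bookkeeping is the crux: the $C_p$-calculation is governed by $m_p := r+(p-1)s$ and places $\wh{\bZ/p}$'s in degrees $t+2s+2i$ and a $\un{\bZ}$-type summand in the top degree $\dim_\bR V = t+2r+2ps$, while the $C_q$-calculation is governed by $s$ and places $\wh{\bZ/q}$'s in degrees $t+2r+2i$ and a $\un{\bZ}$-type summand again in degree $t+2r+2ps$. One checks that the $\wh{\bZ/p}$ degrees and the $\wh{\bZ/q}$ degrees never coincide except possibly at the top $\un{\bZ}$-degree, and that they do not collide with the top degree either (using $p<q$ and the explicit ranges of $i$), so each degree below the top contributes at most one $\wh{\bZ/p}$ or at most one $\wh{\bZ/q}$, and the top degree contributes a single $\un{\bZ}$-flavoured Mackey functor. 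Then I match: a degree carrying a $\wh{\bZ/p}$ after restriction to $C_p$ and nothing after restriction to $C_q$ must be $\wh{\bZ/p}$ as a $G$-Mackey functor (Theorem \ref{t:main} pins it down, and $\wh{\bZ/p}$ from Section \ref{s:GMackeyex} has the right restrictions); a degree carrying a $\wh{\bZ/q}$ after restriction to $C_q$, with $C_p$ acting by $k^i$ (resp.\ $k^{-i}$) by Theorem \ref{t:Cpaction3}, must be $\wh{\bZ/q(k^i)}$ (resp.\ $\wh{\bZ/q(k^{-i})}$), since these are precisely the $G$-Mackey functors with that restriction and that $C_p$-action.

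It remains to identify the top-degree summand in degree $t+2r+2ps$. Restricting to $C_p$ it is $\un{\bZ}$ when $m_p\ge 0$ and $\un{\bZ}^*$ when $m_p<0$; restricting to $C_q$ it is $\un{\bZ}$ when $s\ge 0$ and $\un{\bZ}^*$ when $s<0$, and the $C_p$-action on level $G/C_q$ is trivial (the last sentence of Theorem \ref{t:Cpaction3}). The four versions of the constant Mackey functor $H\un{\bZ}^{a,b}$ from Section \ref{s:GMackeyex} are exactly distinguished by whether $R^G_{C_p}$ has the identity or multiplication-by-$p$ at the $G/C_p$ spot (i.e.\ whether the $C_p$-restriction is $\un{\bZ}$ or $\un{\bZ}^*$) and whether the $C_q$-restriction is $\un{\bZ}$ or $\un{\bZ}^*$. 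Reading off the four sign combinations gives $\un{\bZ}=H\un{\bZ}^{1,1}$, $\un{\bZ}^{1,q}$, $\un{\bZ}^{p,1}$, $\un{\bZ}^{p,q}$ respectively, matching case (1). Finally one should note the boundary conventions: when $m_p=0$ there are no $\wh{\bZ/p}$'s and $t+2s=t+2r+2ps$, so the top $\un{\bZ}$ is the only summand from the $C_p$-side, and similarly for $s=0$; these degenerate cases are consistent with the stated ranges (e.g.\ $i$ runs over an empty set).

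The main obstacle I anticipate is purely combinatorial rather than conceptual: verifying that the degrees of the $\wh{\bZ/p}$ summands, the $\wh{\bZ/q}$ summands, and the top-degree summand are pairwise distinct (outside the obvious top-degree coincidence of the two ``$\un{\bZ}$'' calculations), in all sign regimes of $m_p$ and $s$, and checking the off-by-one endpoints of the index ranges $i=0,\ldots,m_p-1$, $i=1,\ldots,-(m_p+1)$, $i=0,\ldots,ps-1$, $i=1,\ldots,-(ps+1)$ against $\dim_\bR V$. This is where sloppiness would creep in, so I would organise it as a short case analysis on the signs of $m_p$ and $s$ (nine cases, several vacuous), in each case listing the nonzero homology degrees from both restrictions and observing they glue uniquely via Theorem \ref{t:recover}. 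Everything else is a direct appeal to the already-proved restriction calculations and to the classification of cohomological $G$-Mackey functors with prescribed Sylow restrictions.
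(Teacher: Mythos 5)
Your approach is the one the paper implicitly intends: glue the restriction-to-$C_p$ calculation, the restriction-to-$C_q$ calculation, and the $C_p$-action of Theorem~\ref{t:Cpaction3} via the reconstruction Theorem~\ref{t:recover}. However, one claim in your degree bookkeeping is false: you assert that the $\wh{\bZ/p}$ degrees and the $\wh{\bZ/q}$ degrees never coincide below the top degree. They can. Take $p=3$, $q=7$, $r=0$, $s=1$: then $r+(p-1)s=2$ puts $\wh{\bZ/p}$ in degrees $t+2$ and $t+4$, while $ps=3$ puts $\wh{\bZ/q(k^i)}$ in degrees $t$, $t+2$, $t+4$, giving collisions at $t+2$ and $t+4$. (Collisions across sign regimes are ruled out by parity, since the $\wh{\bZ/p}$ offsets from $t+2s$ are even when $r+(p-1)s>0$ and odd when it is negative, and likewise for $\wh{\bZ/q}$ relative to $t+2r$; but within a fixed sign regime they do occur.)

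Fortunately the false claim is not needed. When a $\wh{\bZ/p}$ and a $\wh{\bZ/q(k^i)}$ occupy the same degree, both $G/e$ levels vanish, so in Theorem~\ref{t:recover} the equivalence relation is generated only by the trivial Weyl-group actions, and the reconstructed top level is simply $\bZ/p\oplus\bZ/q$; one recovers the direct sum $\wh{\bZ/p}\oplus\wh{\bZ/q(k^i)}$, which is exactly what the theorem asserts. You should therefore delete the no-collision step and instead argue summand-by-summand: each indecomposable summand visible after restriction to $C_p$ or $C_q$ (equipped with the prescribed $C_p$-action at level $G/C_q$) lifts to the stated $G$-Mackey functor, and the reconstruction of the whole degree is the direct sum of these lifts, collisions or not. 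With that repair the argument is complete and matches the paper's intended (unwritten) proof.
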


\begin{example}
Let $p=3$ and $q=7$, and take $k=2$ in the presentation of $G$. Consider the virtual representation $V = - 4 + 7 V_1 - 2W_1$ of virtual dimension $-2$. To the eyes of $C_p$ this looks like $-8+3V_1$, and to the eyes of $C_q$ this looks like $10-6V_1'$. We conclude that the Mackey-functor valued homology group of $S^V$ with coefficients in $\un{\bZ}$ are as follows:

\begin{center}
\begin{tabular}{c|c|c|c|c|c|c|c|c|c}
 i & -8 & -6 & -4 & -2 & -1 & 1 & 3 & 5 & 7 \\
 \hline
 \rule{0pt}{4ex} $\un{H}_i(S^V; \un{\bZ})$ & $\wh{\bZ/p}$ & $\wh{\bZ/p}$ & $\wh{\bZ/p}$ & $\un{\bZ}^{1,q}$ & $\!\wh{\bZ/q}(2)$\! & \!$\wh{\bZ/q}(4)$\! & $\wh{\bZ/q}$ & \!$\wh{\bZ/q}(2)$\! & \!$\wh{\bZ/q}(4)$\! \\
 \hline
 \rule{0pt}{3ex} \!\!$H_i(S^V; \un{\bZ})(G/G)$\!\! & $\bZ/p$ & $\bZ/p$ & $\bZ/p$ & $\bZ$ & 0 & 0 & $\bZ/q$ & 0 & 0 %Hacked to get rid of overfull hbox
\end{tabular}
\end{center}

\end{example}

\section{The abelian group of order $pq$} \label{s:abgroupoforderpq}
In this section we let $G = C_{pq} \cong C_p \times C_q$ be the abelian group of order $pq$ for odd primes $p < q$. (The condition $p \mid q-1$ is no longer necessary.)

The representation theory of $G$ is much easier, with an irreducible $2$-dimensional representation $V_i$ for each $i=1,\ldots,\frac{pq-1}{2}$. Given a virtual representation $V = t \bR \oplus \bigoplus\limits_{i=1}^{(pq-1)/2} r_i V_i$, the homology calculation for $R^G_{C_p}(V)$ behaves like that of $t' \bR \oplus r V_1$ where $t' = t + 2 \sum\limits_{i \equiv 0 \mod p} r_i$ and $r = \sum\limits_{i \not \equiv 0 \mod p} r_i$. Similarly, the homology calculation for $R^G_{C_q}(V)$ behaves like that of $t'' \bR \oplus s V_1$ where $t'' = t + 2 \sum\limits_{i \equiv 0 \mod q} r_i$ and $s = \sum\limits_{i \not \equiv 0 \mod q} r_i$. By choosing $V$ appropriately we see that any tuple $(t', r, t'', s)$ with $t' \equiv t'' \mod 2$ is possible.

Unlike the calculation for the non-abelian group, there are no interesting group actions here and we get the following:

\begin{thm}
With notation as above, the Mackey functor valued homology groups of $S^V$ with $\un{\bZ}$-coefficients are the direct sum of the following Mackey functors:
\begin{enumerate}
\item \[
 \un{H}_{\dim_\bR(V)}(S^V; \un{\bZ}) \cong \begin{cases}
\un{\bZ} \quad & \textnormal{if $r \geq 0$ and $s \geq 0$} \\
\un{\bZ}^{1,q} & \textnormal{if $r \geq 0$ and $s < 0$} \\
\un{\bZ}^{p,1} & \textnormal{if $r < 0$ and $s \geq 0$} \\
\un{\bZ}^{p,q} & \textnormal{if $r < 0$ and $s < 0$} \\
\end{cases}
\]

\item If $r > 0$, a $\wh{\bZ/p}$ in dimension $t'+2i$ for $i=0,\ldots,r-1$.

\item If $r < 0$, a $\wh{\bZ/p}$ in dimension $t'-2i-1$ for $i=1,\ldots,-(r+1)$.

\item If $s > 0$, a $\wh{\bZ/q}$ in dimension $t''+2i$ for $i=0,\ldots,s-1$.

\item If $s < 0$, a $\wh{\bZ/q}$ in dimension $t''-2i-1$ for $i=1,\ldots,-(s+1)$.
\end{enumerate}
\end{thm}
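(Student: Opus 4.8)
The plan is to run the same argument as in the non-abelian case treated in Sections~\ref{s:GMackeyex}--\ref{s:Ghomologyofpoint}, but more cheaply: compute the restrictions $\downarrow^G_{C_p}\un{H}_n(S^V;\un{\bZ})$ and $\downarrow^G_{C_q}\un{H}_n(S^V;\un{\bZ})$ from the known homology-of-a-point calculation for the cyclic groups $C_p$ and $C_q$, record the relevant $W_G(H)$-actions, and then invoke Theorem~\ref{t:main} (via Theorem~\ref{t:recover}) to conclude that $\un{H}_n(S^V;\un{\bZ})$ is the unique cohomological $G$-Mackey functor carrying that data, which is the direct sum claimed in the statement.

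For the restriction to $C_p$: an irreducible $2$-dimensional $V_i$ restricts to $2\bR$ when $p \mid i$ and to a faithful $2$-dimensional $C_p$-representation otherwise, and since over $C_p$ the $C_p$-homotopy type of a sphere on a sum of faithful $2$-dimensional representations depends only on the number of summands, $\downarrow^G_{C_p}S^V$ has the $C_p$-homotopy type of $S^{t'\bR \oplus rV_1}$ with $t',r$ as in the statement. The $C_p$-equivariant homology of this sphere is computed in \cite{An21B}: a $\un{\bZ}$ or $\un{\bZ}^*$ in the top degree according to the sign of $r$, together with a string of copies of $\wh{\bZ/p}$ in the indicated degrees. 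The same argument with $C_p$ replaced by $C_q$ handles the restriction to $C_q$ and yields a $\un{\bZ}$ or $\un{\bZ}^*$ and a string of $\wh{\bZ/q}$'s according to the sign of $s$. It remains to check the residual actions needed in Theorem~\ref{t:main}: $W_G(e)=G$ acts on $\un{H}_n(S^V;\un{\bZ})(G/e)=H_n(S^{\dim_\bR V})$ through orientation-preserving linear maps, hence trivially; and since $G$ is abelian, each irreducible $G$-representation restricts to $C_p$ (resp.\ $C_q$) to an irreducible representation, so $W_G(C_q)=C_p$ acts on $\un{H}_n(S^V;\un{\bZ})(G/C_q)$ by rotations of a representation sphere, which are equivariantly homotopic to the identity, hence trivially on homology; likewise for $W_G(C_p)=C_q$. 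This triviality is the one place where the abelian case is genuinely easier than the non-abelian one, where these actions were the main difficulty.

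Finally I assemble the top level via Theorem~\ref{t:recover} with $P_1=C_p$, $P_2=C_q$. Since all the $c_g$ are trivial, the equivalence relation $\sim$ collapses to $\tr_e^{C_p}(x)\sim\tr_e^{C_q}(x)$ for $x\in\un{H}_n(S^V;\un{\bZ})(G/e)$. Working degree by degree: a $\wh{\bZ/p}$ constituent of the $C_p$-restriction is zero at levels $G/C_q$ and $G/e$, so it passes through $\sim$ untouched and assembles to the $G$-Mackey functor $\wh{\bZ/p}$; symmetrically for $\wh{\bZ/q}$, and these two can only occupy a common degree with each other, in which case the corresponding $G$-Mackey functor is their direct sum. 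In the top degree $t+2\sum r_i = t'+2r = t''+2s$ the $G/e$-level is $\bZ$, the $C_p$- and $C_q$-restrictions are $\un{\bZ}$ or $\un{\bZ}^*$ according to the signs of $r$ and $s$, and $\tr_e^{C_p}(1)$, $\tr_e^{C_q}(1)$ equal $p$ or $1$, resp.\ $q$ or $1$; in each of the four sign cases the resulting pair is a primitive vector, so $\bZ^2/\!\sim\,\cong\bZ$, and reading off the restriction multiplicities identifies it as $\un{\bZ}$, $\un{\bZ}^{1,q}$, $\un{\bZ}^{p,1}$, or $\un{\bZ}^{p,q}$, as claimed. There is no real conceptual obstacle once the cyclic calculations and Theorem~\ref{t:main} are in hand; the only thing requiring care is keeping the four sign cases and the two degree shifts $t'$, $t''$ straight while matching restrictions, and confirming (by parity and degree) that no other pair of constituents can interact.
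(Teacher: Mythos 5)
Your proposal is correct and follows the same route the paper takes (indeed the paper states this theorem without an explicit proof, relying on the paragraph preceding it, which reduces to exactly the two cyclic restrictions $t'\bR\oplus rV_1$ and $t''\bR\oplus sV_1$, observes the triviality of the residual actions, and invokes Theorem~\ref{t:recover}). You have simply written out the assembly step — the collapse of the equivalence relation to $\tr_e^{C_p}(x)\sim\tr_e^{C_q}(x)$, the non-interaction of the $\wh{\bZ/p}$ and $\wh{\bZ/q}$ constituents away from the top degree, and the four sign cases there — which the paper leaves implicit.
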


\section{The alternating group $A_4$} \label{s:A4}
The description of an $A_4$-Mackey functor is similar to that of a $G$-Mackey functor as discussed in the previous sections, except that we have an additional level of subgroups. Let $K < A_4$ denote the Klein four subgroup. We will ignore the $3$ subgroups between $e$ and $K$ and draw an $A_4$-Mackey functor as
\[
 \xymatrix{ & \un{M}(A_4/A_4) \ar@/_/[rd]_-{R^{A_4}_K} \ar@/_/[ldd]_-{R^{A_4}_{C_3}} & \\ & & \un{M}(A_4/K) \ar@/_/[ul]_-{\tr_K^{A_4}} \ar@/_/[ldd]_-{R^K_e} \\ \un{M}(A_4/C_3) \ar@/_/[uur]_-{\tr_{C_3}^{A_4}} \ar@/_/[rd]_-{R^{C_3}_e} & & \\ & \un{M}(A_4/e) \ar@/_/[ruu]_-{\tr_e^K} \ar@/_/[lu]_-{\tr_e^{C_3}} & }
\]
We do not mean to imply that those subgroups are not important, but the difficulty is orthogonal to the discussion in this paper.

A calculation of $\un{H}_*(S^V; \un{\bZ})$ for $G=K$ can be found \cite[Section 4.8]{EB20}. In particular Ellis-Bloor finds that the Bockstein spectral sequence computing $\un{H}_*(S^V; \un{\bZ})(K/K)$ collapses at $E_2$ and that except for a single $\bZ$ in degree $\dim_\bR(V)$, $\un{H}_n(S^V; \un{\bZ})(K/K)$ is a direct sum of $\bZ/2$'s.

For $G=A_4$, the $C_3$-action on $\un{H}_n(S^V; \un{\bZ})(A_4/K)$ is given by cyclically permuting the $x_i$ and the $y_i$, so it can (with some difficulty) be read off from Ellis-Bloor's calculation. Except in degree $\dim_\bR(V)$, $\un{H}_n(S^V; \un{\bZ})(A_4/K)$ is a direct sum of copies of $\bZ/2$ with a trivial $C_3$-action, and $\bZ/2 \oplus \bZ/2$ where the generator of $C_3$ acts by $\left( \begin{smallmatrix} 0 & 1 \\ 1 & 1 \end{smallmatrix} \right)$. If we write the former as $\bF_2$ and the latter as $\bF_4$ we then get Mackey functors
\[
 \wh{\bZ/3} = \vcenter{\hbox{\xymatrix{ & \bZ/3 \ar@/_/[rd] \ar@/_/[ldd]_-{1} & \\ & & 0 \ar@/_/[ul] \ar@/_/[ldd] \\ \bZ/3 \ar@/_/[uur]_-{1} \ar@/_/[rd] & & \\ & 0 \ar@/_/[ruu] \ar@/_/[lu] & } }}
\]
and
\[
 \wh{\bF}_2 = \vcenter{\hbox{ \xymatrix{ & \bF_2 \ar@/_/[rd]_-{1} \ar@/_/[ldd] & \\ & & \bF_2 \ar@/_/[ul]_-{1} \ar@/_/[ldd] \\ 0 \ar@/_/[uur] \ar@/_/[rd] & & \\ & 0 \ar@/_/[ruu] \ar@/_/[lu] & } }}
\qquad
 \wh{\bF}_4 = \vcenter{\hbox{ \xymatrix{ & 0 \ar@/_/[rd] \ar@/_/[ldd] & \\ & & \bF_4 \ar@/_/[ul] \ar@/_/[ldd] \\ 0 \ar@/_/[uur] \ar@/_/[rd] & & \\ & 0 \ar@/_/[ruu] \ar@/_/[lu] & } }}
\]
Here the restriction of $\wh{\bF}_2$ to $K$ can be any of the following three Mackey functors:
\[
 \xymatrix{
   & \bF_2 \ar[rd] \ar[d] \ar[ld] & \\
   \bF_2 & \bF_2 & \bF_2 \\
   & 0 &
 }
 \qquad
 \xymatrix{
   & \bF_2 & \\
   \bF_2 \ar[ru] & \bF_2 \ar[u] & \bF_2 \ar[lu] \\
   & 0 &
 }
 \qquad
 \xymatrix{
  & \bF_2 & \\
  0 & 0 & 0 \\
  & 0 &
 }
\]
The restriction of $\wh{\bF}_4$ to $K$ can be any of the following three Mackey functors:
\[
 \xymatrix{
  & \bF_4 \ar[ld]_-{(1,0)\!\!\!} \ar[d]_{(1,1)\!\!\!} \ar[rd]^-{\!\!\!(0,1)} & \\
  \bF_2 & \bF_2 & \bF_2 \\
  & 0 &
 }
 \qquad
 \xymatrix{
  & \bF_4 & \\
  \bF_2 \ar[ru]^-{(1,0)\!\!\!} & \bF_2 \ar[u]_{\!\!\!(1,1)} & \bF_2 \ar[lu]_-{\!\!\!(0,1)} \\
  & 0 &
 }
 \qquad
 \xymatrix{
  & \bF_4 & \\
  0 & 0 & 0 \\
  & 0 &
 }
\]

The irreducible real representations of $A_4$ are $1$, $V_1$ and the standard representation $A$. We have $R^{A_4}_{C_3}(A) = 1 + V_1$ and $R^{A_4}_K(A) = \sigma_1 + \sigma_2 + \sigma_3$.

Given a virtual representation $V = t\bR \oplus r V_1 \oplus s A$, we can once again analyse $\un{H}_*(S^V; \un{\bZ})$. The number of $\wh{\bZ/3}$'s now depends on $r+s$, and the number of $\wh{\bF}_2$'s and $\wh{\bF}_4$'s depends (in some complicated way) on $s$. Finally, we have a single $\un{\bZ}$, $\un{\bZ}^{3,1}$, $\un{\bZ}^{1,4}$ or $\un{\bZ}^{3,4}$ in degree $\dim_\bR(V)$:
\[
 \un{H}_{t+2r+3s}(S^V; \un{\bZ}) \cong \begin{cases}
 \un{\bZ} \quad & \textnormal{if $r+s \geq 0$ and $s \geq 0$} \\
 \un{\bZ}^{1,4} & \textnormal{if $r+s \geq 0$ and $s < 0$} \\
 \un{\bZ}^{3,1} & \textnormal{if $r+s < 0$ and $s \geq 0$} \\
 \un{\bZ}^{3,4} & \textnormal{if $r+s < 0$ and $s < 0$} \\
 \end{cases}
\]

% \bibliographystyle{plain}
% \bibliography{b.bib}

\end{document}